\journal{}
\newtheorem{prop}{Proposition}[section]
\newtheorem{theorem}{Theorem}[section]
\newtheorem{lemma}{Lemma}[section]
\newtheorem{remark}{Remark}[section]
\renewcommand{\appendixname}{A}
\numberwithin{equation}{section}
\begin{document}
\begin{frontmatter}

\title{$L^p$ estimates for fractional Schr\"{o}dinger operators with Kato class potentials}

\author{$^a$ Shanlin Huang} \ead{shanlin\_huang@hust.edu.cn}

\author{$^b$ Ming Wang \corref{cor1}}\cortext[cor1]{Corresponding author.} \ead{mwangcug@outlook.com, Tel.: +86 027 67883091}

\author{$^a$ Quan Zheng} \ead{qzheng@hust.edu.cn}

\author{$^a$ Zhiwen Duan} \ead{duanzhw@hust.edu.cn}

\address{$^a$ School of Mathematics and Statistics, Huazhong University of Science and Technology,  Wuhan,  430074, China.}

\address{$^b$ School of Mathematics and Physics, China University of
Geosciences, Wuhan, Hubei, 430074,   China.}


\begin{keyword}
$L^p$ estimates; heat kernel estimates; fractional Schr\"{o}dinger equations.
\end{keyword}

\begin{abstract}
Let $\alpha>0$, $H=(-\Delta)^{\alpha}+V(x)$, $V(x)$ belongs to the higher order Kato class $K_{2\alpha}(\mathbb{R}^n)$.  For $1\leq p\leq \infty$, we prove a polynomial upper bound of $\|e^{-itH}(H+M)^{-\beta}\|_{L^p, L^p}$ in terms of time $t$ for all integers $\alpha$ and $2\alpha\ge[\frac n2]+1$ if $\alpha$ is not an integer. Both the smoothing exponent $\beta$ and the growth order in $t$  are almost optimal compared to the free case. The main ingredients in our proof are pointwise heat kernel estimates for the semigroup $e^{-tH}$. We obtain a Gaussian upper bound with sharp coefficient for integral $\alpha$ and a polynomial decay for fractional $\alpha$.
\end{abstract}

\end{frontmatter}

\section{Introduction}
Let $\alpha$ be a positive number. Consider the fractional Schr\"{o}dinger equation
\begin{equation}\label{equ1.01}
i\partial_{t}u +(-\Delta)^{\alpha}u+V(x)u=0, \quad (t,x)\in \mathbb{R}\times \mathbb{R}^n,
\end{equation}
with initial condition
\begin{equation}\label{equ1.02}
u(0,x)=u_{0}(x).
\end{equation}
The function $V(x)$ is often called {the} potential, the fractional Laplacian $(-\Delta)^\alpha$ is defined as a Fourier multiplier with symbol $|\xi|^{2\alpha}$. For $\alpha\in (0,1)$, the equation \eqref{equ1.01}-\eqref{equ1.02} was introduced by Laskin \cite{Laskin} as a result of extending the Feynman path integral, from the Brownian-like to Levy-like quantum mechanical paths. Recently, the fractional Schr\"{o}dinger equation has been studied extensively in literatures, since it appears naturally in problems involving
nonlinear optics, plasma physics and other areas. We just refer the readers to \cite{DZF, HYZ, kit} for spectral and scattering theories and to \cite{Cho, guo, hong} for well posedness and ill posedness of nonlinear equations, respectively.


In this paper, we are interested in  {investigating} the $L^p$ estimates for solutions  {of the equation} \eqref{equ1.01}-\eqref{equ1.02} with potentials $V$ belonging to the higher order Kato class $K_{2\alpha}(\mathbb{R}^n)$. Recall that, for every $\alpha>0$, a real-valued measurable function $V(x)$ on $\mathbb{R}^n$ is said to lie in $K_{2\alpha}(\mathbb{R}^n)$ if
$$
\sup_{x\in\mathbb{R}^n}\int_{|x-y|<1}{|V(y)|dy}< \infty, \qquad \textmd{for}~ 2\alpha> n,
$$
and
$$
\lim_{\delta \to 0}\sup_{x\in\mathbb{R}^n}\int_{|x-y|<\delta}{\omega_{\alpha}(x-y)|V(y)|dy}=0, \qquad \textmd{for}~ 0<2\alpha\leq n,
$$
where
\begin{equation*}
\omega_{\alpha}(x) =
\left\{\begin{array}{cl}
|x|^{2\alpha-n}, & \textmd{if }~ 0<2\alpha<n,\\
\ln{|x|^{-n}}, & \textmd{if }~ 2\alpha=n.
\end{array}\right.
\end{equation*}
Note that $K_{2}(\mathbb{R}^n)$ coincides with the classical Kato class. If $p\neq 2$ and $t\neq0$, it is well known that the free Schr\"{o}dinger group $e^{it\triangle}$ is not bounded on $L^p(\mathbb{R}^n)$(see e.g. \cite{Hormander}), thus the solution $u(t)$ of \eqref{equ1.01}-\eqref{equ1.02} is not necessarily  bounded on $L^p(\mathbb{R}^n)$ for general $u_0 \in L^p(\mathbb{R}^n)$. Nevertheless, we shall establish quantitative bounds of the solution for {equations} \eqref{equ1.01}-\eqref{equ1.02} on $L^p(\mathbb{R}^n)$ if some smoothness assumptions are imposed on the initial data. Here is our main result.

\begin{theorem}\label{thm1.2}
Let $\alpha$ be a positive integer or {let} $2\alpha\ge[\frac n2]+1$ if $\alpha$ is not an integer. {Assume that} $H=(-\Delta)^{\alpha}+V(x)$, {where} $V\in K_{2\alpha}(\mathbb{R}^n)$.  Then, for $M$ large enough, there exists a positive constant $C$ independent of $t$ so that the estimate
\begin{equation}\label{1.02}
\|e^{-itH}(H+M)^{-\beta}\|_{L^p, L^p}\leq C(1+|t|)^{\gamma},\,\, t\in \mathbb{R}
\end{equation}
holds for all $1\leq p\leq \infty$ and $\beta,\gamma>n|\frac{1}{2}-\frac{1}{p}|$.
\end{theorem}

Some related works are summarized as follows:
\begin{center}
\begin{tabular}{|c|c|c|c|c|}
\hline
Brenner, Thom$\acute{e}$e $\&$Wahlbin \cite{BTW} &$\alpha = 1$ & $V=0$ \\ \hline
 Fefferman $\&$ Stein \cite{Fefferman} & $\alpha>0$ & $V=0$  \\ \hline
Jensen $\&$ Nakamura \cite{A.Jensen1, A.Jensen2} & $\alpha=1$ & $V\in K_2(\mathbb{R}^n)$  \\ \hline
\end{tabular}
\end{center}
Clearly, these works are contained in \ref{thm1.2}.
Moreover, both the smoothing exponent $\beta$ and the growth order $\gamma$ are {almost} optimal in the sense that they can not be improved for the free Schr\"{o}dinger   evolution $e^{it\triangle}$ \cite{BTW}.

Now we turn to the proof of Theorem \ref{thm1.2}.
There is a {powerful} approach to deduce \eqref{1.02} by heat kernel estimates. In fact, let $\{e^{-zH}\}_{z\in \mathbb{C}^{+}}$, $\mathbb{C}^{+}$ is the open
right half-plane, be an analytic bounded semigroup on $L^2(\mathbb{R}^n)$, and let $K$ be the integral kernel associated with {$e^{-tH}$}, namely
$$
(e^{-tH}f)(x)=\int_{\mathbb{R}^n} K(t,x,y)f(y)dy, \quad f \in L^2(\mathbb{R}^n).
$$
If the estimate
\begin{equation}\label{1.03}
|K(t,x,y)|\leq C_{1}t^{-\frac{n}{2m}}\exp\left\{-C_{2}\frac{|x-y|^{\frac{2m}{2m-1}}}{t^{\frac{1}{2m-1}}}+C_3t\right\},\,~~t>0,
\end{equation}
holds for some positive constants $C_1, C_2, C_3$, then \eqref{1.02} follows. The result is proved partially by Zheng and Zhang \cite{zheng99}, and fully by Carron, Coulhon and Ouhabaz \cite{CCO}. We also note that recently, {the work by D'Ancona and Nicola \cite{DN} (see also \cite{BDN}) has made a far-reaching generalization in the abstract level.} They can obtain sharp $L^p$ estimates by some general assumptions on $e^{-tH}$. Thus, the proof of Theorem \ref{thm1.2} is reduced to establishing the kernel estimates of the semigroup $e^{-t((-\Delta)^\alpha+V)}$, which are the main contributions of the paper. {The asymptotic behavior of the heat kernel, however, turns out to be quite different regarding whether $\alpha$ is an integer or not.} So we divide our {discussions} into two {cases, which presented in Section \ref{sec2} and \ref{sec3} respectively.}

Section \ref{sec2} is devoted to the case when $\alpha=m$ is a positive integer. We shall prove the following quantitative bounds of the heat kernel.
\begin{theorem}\label{thm1.3}
Let $V \in K_{2m}(\mathbb{R}^n)$. Then there are constants $\varepsilon_1\in(0,1)$ and $C>0$ depending only on $m,n$  such that the kernel of the semigroup $e^{-t((-\Delta)^m + V)}$ satisfies
\begin{align*}
|K(t,x,y)| \leq C \varepsilon^{-(n+1)([\frac{n}{2m}]+3)}t^{-\frac{n}{2m}}\exp\left\{-\varsigma_m\frac{|x - y|^{\frac{2m}{2m - 1}}}
{(1+\varepsilon)t^{\frac{1}{2m - 1}}} +\varepsilon^{-(n+3)} V_{\varepsilon^{n+3}} t \right\}
\end{align*}
for all $t>0, \varepsilon\in (0,\varepsilon_1)$, where $[s]$ denotes the integer part of $s$, $V_{\varepsilon^2}$ depending on $V$ and  $\varepsilon$ (see Section 2 for the definition), $\varsigma_m = (2m - 1)(2m)^{-\frac{2m}{2m - 1}}\sin\left(\frac{\pi}{4m - 2}\right)$.
\end{theorem}

A few remarks are given in order:

\textbf{(1).} If $m=1$, Theorem \ref{thm1.3} follows easily from the famous Feynman-Kac path formula \cite{Simon}. However, it is known that the F-K formula is not available for higher order Schr\"{o}dinger semigroup \cite{E. B. Davies2}. Zheng and Yao \cite{zheng09} tried to overcome this difficulty by a purely semigroup approach, but only obtained $L^p-L^q$ estimates.

\textbf{(2).} Recently, Deng, Ding, and Yao \cite{DDY} established \eqref{1.03} for the kernel of $e^{-t((-\Delta)^m + V)}$ using Davies' perturbation method\cite{E. B. Davies95}. Compared to \cite{DDY}, Theorem \ref{thm1.3} gives quantitative information on the constants appearing in the {estimate} \eqref{1.03}. Moreover, the constant $\varsigma_m$ is sharp according to the work of Barbatis and Davies \cite{Barbatis96}, and Dungey  \cite{Dungey}. We mention that some heat kernel estimates with sharp constant $\varsigma_m$ of elliptic operators of order $2m$ are obtained by Barbatis \cite{Bar98, Bar01, G. Barbatis4}, however the restriction $2m>n$ is needed there for technical reasons.

\textbf{(3).} In order to obtain the sharp constant $\varsigma_m$, we consider the conjugated operator $P_\lambda(D) = e^{-\lambda \phi}(-\Delta)^{m}e^{\lambda \phi}$ with $\phi$ being linear, inspired by the work of Barbatis and Davies \cite{Barbatis96}. This differs from the treatment in \cite{DDY}, where $\phi$ is chosen to be a bounded smoothing function. The choice of linear weight  allows us to obtain more precise resolvent estimates in $L^1(\mathbb{R}^n)$, by taking full advantage of pointwise kernel bounds of $e^{-z(-\Delta)^{m}}$ with ``complex time'' $\Re z> 0$.

\textbf{(4).} Compared to \cite{ Bar98, Bar01, G. Barbatis4, Barbatis96}, the main difficulty lies in that the sum $P_\lambda(D) + V$ can not be understood in the operator sense on $L^2(\mathbb{R}^n)$ for general $V \in K_{2m}(\mathbb{R}^n)$. To overcome this difficulty,  we construct a sesquilinear form $Q_{\lambda,V}$, and show the associated operator generates an analytic semigroup on $L^2(\mathbb{R}^n)$. After that, we further check that the semigroup constructed on $L^1(\mathbb{R}^n)$ and  on $L^2(\mathbb{R}^n)$ are consistent. Finally, we prove a quantitative $L^1- L^\infty$ type estimate of the heat semigroup $e^{-t(P_\lambda(D) + V)}$ by its smoothing effect, {which implies} Theorem \ref{thm1.3} via a standard method.

Section 3 is devoted to the kernel estimates for $e^{-t((-\Delta)^{\alpha}+V)}$ with fractional $\alpha>0$. Our result is contained in the following

\begin{theorem}\label{thm1.4}
Assume that $V \in K_{2\alpha}(\mathbb{R}^n)$, $\alpha\in \mathbb{R}_{+}\setminus\mathbb{N}$. Then for every $0 < \varepsilon \ll 1$, there are constants $C$, and $\mu_{\varepsilon, V}$ depending on $V$ and  $\varepsilon$, such that  the kernel of the semigroup $e^{-t((-\Delta)^{\alpha} + V)}$ satisfies
\begin{align}\label{equ1.08}
|K(t,x,y)|\leq Ce^{\mu_{\varepsilon, V} t}\frac{t}{(|x-y|^2+t^{\frac{1}{\alpha}})^{\frac n2+\alpha}}, \quad t>0.
\end{align}
\end{theorem}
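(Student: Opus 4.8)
The plan is to reduce Theorem \ref{thm1.4} to the known heat kernel bound for the free fractional semigroup by a perturbation argument adapted to the absence of exponential decay. First I would recall that for fractional $\alpha\in\mathbbm{R}_+\setminus\mathbbm{N}$ the free kernel $K_0(t,x,y)$ of $e^{-t(-\triangle)^{\alpha}}$ satisfies the two-sided bound $K_0(t,x,y)\asymp t\,(|x-y|^2+t^{1/\alpha})^{-n/2-\alpha}$, which is the model right-hand side appearing in \eqref{equ1.08}; this is classical for $0<\alpha<1$ (Blumenthal--Getoor) and extends to all non-integer $\alpha$ by subordination / oscillatory integral estimates on $e^{-t|\xi|^{2\alpha}}$, using that the symbol $|\xi|^{2\alpha}$ is not smooth at the origin only to finite order, so the kernel has exactly polynomial (not Gaussian) decay. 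I would denote this polynomial profile by $p_t(x-y)$ and record its basic semigroup-type properties: $\|p_t\|_{L^1}\le C$ uniformly in $t$, and the near-diagonal lower bound $p_t(x)\ge c\,t^{-n/(2\alpha)}$ for $|x|\lesssim t^{1/(2\alpha)}$.

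Next I would set up the Duhamel/Dyson expansion. Since $V\in K_{2\alpha}(\mathbbm{R}^n)$, the form sum $H=(-\triangle)^\alpha+V$ generates a strongly continuous semigroup on $L^2$, and the Kato-class smallness gives, for $\varepsilon$ small, a decomposition $V=V_1+V_\varepsilon$ with $\|V_1\|$ small relative to $(-\triangle)^\alpha$ and $V_\varepsilon\in L^\infty$ with $\|V_\varepsilon\|_\infty\le \mu_{\varepsilon,V}$ (this is exactly the kind of splitting used to define $V_{\varepsilon^2}$ in Section \ref{sec2}). The perturbed kernel is then represented by
\begin{equation*}
K(t,x,y)=\sum_{k=0}^{\infty}(-1)^k K_k(t,x,y),\qquad
K_k(t,x,y)=\int_{0<s_1<\dots<s_k<t}\!\!\int\! p_{s_1}(x-z_1)V(z_1)\cdots V(z_k)p_{t-s_k}(z_k-y),
\end{equation*}
and the crux is to show that the polynomial profile $p_t$ is stable under convolution against a Kato-class potential, i.e. that there are constants $A,B$ with the ``3P-type'' inequality
\begin{equation*}
\int_0^t\!\int_{\mathbbm{R}^n} p_s(x-z)\,|V(z)|\,p_{t-s}(z-y)\,dz\,ds\le (As+B\mu_{\varepsilon,V}t)\,p_t(x-y).
\end{equation*}
Granting this, an induction on $k$ yields $|K_k(t,x,y)|\le \frac{(A+B\mu_{\varepsilon,V}t)^k}{k!}\,p_t(x-y)$ (the combinatorial $1/k!$ coming from the ordered simplex of times), so the series converges absolutely and sums to $Ce^{(A+B\mu_{\varepsilon,V})t}p_t(x-y)$, which is \eqref{equ1.08} after absorbing constants into $\mu_{\varepsilon,V}$. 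Consistency of this $L^1$-constructed kernel with the $L^2$ form semigroup is checked exactly as announced for Theorem \ref{thm1.3}: both agree on a dense set and the uniform bounds let one pass to the limit.

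The main obstacle is the 3P-type inequality for the polynomial kernel, because unlike the Gaussian case there is no clean pointwise semigroup identity $p_s*p_{t-s}=p_t$; one only has $p_s*p_{t-s}\le C p_t$ up to a constant, and the potential is merely locally integrable with a Kato modulus of continuity rather than bounded. I would handle it by splitting the $z$-integral and the $s$-integral into regimes according to whether $|x-z|$, $|z-y|$ are small or large compared with $s^{1/(2\alpha)}$, $(t-s)^{1/(2\alpha)}$: in the regime where $z$ is far from both $x$ and $y$ one uses the polynomial upper bound on $p$ together with $\int|V(z)|\,dz$ over annuli controlled by the Kato condition; in the near-diagonal regime one uses $p_s(x-z)\le C s^{-n/(2\alpha)}$ and estimates $\int_{|x-z|\le s^{1/(2\alpha)}}|V(z)|\,dz$ by the Kato modulus $\eta_V(s^{1/(2\alpha)})\to 0$, which produces the first, $t$-independent constant $A$; the $L^\infty$ part $V_\varepsilon$ contributes the factor $\mu_{\varepsilon,V}t$ directly from $\int_0^t\|V_\varepsilon\|_\infty\,ds$. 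A secondary technical point is establishing the two-sided free bound and the convolution inequality $p_s*p_{t-s}\le Cp_t$ for general non-integer $\alpha$ with $2\alpha\ge n$ possibly large; here the $\omega_\alpha\equiv 1$ case of Definition \ref{def1.1} makes the Kato condition very weak, but correspondingly $p_t$ is bounded and the estimates simplify, so the argument should go through uniformly in $\alpha$ on compact subsets of $\mathbbm{R}_+\setminus\mathbbm{N}$.
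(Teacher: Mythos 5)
Your overall skeleton matches the paper's: expand $K(t,x,y)$ in a Dyson/Duhamel series around the free polynomial profile $p_t\asymp I(t,\cdot)$, prove a 3P-type stability of $I$ under integration against a Kato-class potential, sum the series, and reconcile the resulting $L^1$ kernel with the $L^2$ form semigroup. But there are two substantive gaps.

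First, the convergence mechanism you invoke is wrong. You claim the ordered simplex gives $|K_k|\le \frac{(A+B\mu t)^k}{k!}p_t$ and hence a convergent exponential series for all $t$. That $1/k!$ would only appear if, at each step, the time integral contributed a factor linear in the remaining time with a bound independent of the spatial variables; it does not, because the 3P inequality you write already integrates $ds$ over $(0,t)$ and leaves a factor governed by the Kato modulus $K_V(t)$, which is $o(1)$ as $t\to 0$ but not $O(t)$. The paper's induction (Step~1) accordingly produces $|K_j(t,x,y)|\le C_1(\omega K_V(t))^j I(t,x-y)$ with no factorial, which only sums geometrically when $\omega K_V(t)<1$, i.e.\ for $t\in(0,V^\varepsilon)$. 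The bound for general $t$ is then obtained not from the series but from the Chapman--Kolmogorov identity $K(t,\cdot)=K(t/2,\cdot)\ast K(t/2,\cdot)$ iterated dyadically, which is where the exponential prefactor $e^{\mu_{\varepsilon,V}t}$ actually comes from. Splitting $V$ into a bounded piece plus a small-Kato piece does not repair this, because only the bounded piece contributes $1/k!$ and the cross terms do not recombine cleanly.

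Second, your closing remark that "both agree on a dense set and the uniform bounds let one pass to the limit" elides what the authors single out as the new technical content for the fractional case. Because $2\alpha$ need not be $\le 2$, the Voigt absorption-semigroup/positivity route used in earlier work is unavailable; the paper's Step~2 instead shows $\lim_{N\to\infty}\|e^{-tH}-\sum_{j\le N}(-1)^jT_j(t)\|_{L^1,L^1}=0$ by expanding the resolvent $(\mu+H)^{-1}$ as a Neumann series on a sector $\omega_0+\Sigma_{\theta_0+\pi/2}$, controlling the tail $r_N(\mu)$ uniformly by $2^{-N}$, invoking analyticity of $e^{-z(-\triangle)^\alpha}$, and identifying the Laplace transform of each $T_j$ with the $j$-th Neumann term via the resolvent-kernel bound $|R_j(\mu,x,y)|\le CK_V(\mu^{-1})^jJ(\mu^{-1},x-y)$, concluding by uniqueness of vector-valued Laplace transforms (Arendt et al.). Without some such argument you have only constructed a candidate kernel, not shown it is the kernel of $e^{-tH}$.

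Your 3P inequality, stated as written, also has a dangling $s$ on the right-hand side after $\int_0^t\cdots ds$; presumably you mean a constant there. Fixing that, the inequality itself is of the right form and is essentially the combination of the paper's two ingredients $I(t,x)I(s,y)\le C_4 I(t+s,x+y)(I(t,x)\vee I(s,y))$ and $\int_0^t I(s,x)\,ds\le eC_2J(t,x)$, so that part of the plan is sound.
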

The decay rate $\frac{n}{2}+\alpha$ in \eqref{equ1.08} is sharp in the sense that it can not be improved in the free case \cite{bal,Miyaj}. In the case $0<\alpha<1$, Theorem \ref{thm1.4} is contained in \cite[Proposition 4.1]{shi}, though the fact is not pointed out explicitly there. The proof in \cite{shi} is based on Voigt's theory of absorption semigroup \cite{voi86, voi88}. However, it only works for positivity  semigroups and thus fails in the case $\alpha>1$.  In \cite{Bogdan.K}, Bogdan and Jackubowski  proved the estimate \eqref{equ1.08} for the kernel of the semigroup generated by $(-\Delta)^{\alpha}+b(x)\cdot\nabla$ for $\frac{1}{2}<\alpha<1$ and $V\in K_1(\mathbb{R}^n)$. Xie and Zhang \cite{zhang} improved the result to $\alpha=\frac{1}{2}$. The main tools in \cite{Bogdan.K}  are the so-called the $3P$ Theorem and a characterization of Kato class potentials $K_{2\alpha}(\mathbb{R}^n)$. In Section 3, we adapt these tools to the case $\alpha>1$ and prove Theorem \ref{thm1.4}. Our new ingredient in this part is to show that $-((-\Delta)^{\alpha}+V)$ is the generator of the constructed semigroup in the usual sense, while only weak generator was obtained in \cite{Bogdan.K, zhang}.  To this end, we shall exploit the analyticity of the semigroup generated by $-(-\Delta)^\alpha$ and the uniqueness of vector-valued Laplace transform.

 {In section 4, we shall apply our heat kernel estimates to establish Theorem \ref{thm1.2}.} Note that when $\alpha=m$ is an positive integer, then Theorem \ref{thm1.2} follows from Theorem \ref{thm1.3} directly as mentioned before. However, when $\alpha$ is not an integer, the approach in \cite{CCO} does not work since Theorem \ref{thm1.4} gives only polynomial decay in $|x-y|$ of the kernel as $|x-y|$ goes to infinity. We mention that D'Ancona and Nicola's interesting result \cite[Section 5]{DN} indicates that if the kernel of $e^{-tH}$ satisfies \eqref{equ1.08}, then the $L^p$ estimate \eqref{1.02} is valid for $2\alpha>[\frac n2]+1$. In our particular case $H=(-\Delta)^{\alpha}+V(x)$, we can further show that the result is also true when $2\alpha=[\frac n2]+1$ by adapting methods in  \cite{DN, A.Jensen1, A.Jensen2}.

\section{Sharp heat kernel estimates for the integer case}\label{sec2}

In this section, we always assume that $m$ is a positive integer and the potential $V$ belongs to $K_{2m}(\mathbb{R}^n)$.
For any $\varepsilon>0$, it follows from \cite{zheng09} that, there exists  a constant $\sigma_\varepsilon>0$ such that
\begin{equation}\label{equ-L1-small}
\|V\varphi\|_{L^1} \leq \varepsilon \|(-\Delta)^{m} \varphi\|_{L^1} + \sigma_\varepsilon \|\varphi\|_{L^1}, \quad \textmd{for all } \varphi \in \mathcal {L}^{2m,1}(\mathbb{R}^n),
\end{equation}
where $\mathcal {L}^{2m,1}(\mathbb{R}^n)$ denotes the $2m$ order Bessel space in $L^1(\mathbb{R}^n)$.

Fixed $\varepsilon \in (0,1)$,  we define a number
$$
V_\varepsilon = \inf{}\{\sigma: \sigma \in E_\varepsilon\}
$$
where
$$
E_\varepsilon = \bigg\{\sigma \geq 0: \|V\varphi\|_{L^1} \leq \varepsilon \|(-\Delta)^{m} \varphi\|_{L^1} + \sigma \|\varphi\|_{L^1}, \quad \textmd{for all } \varphi \in \mathcal {L}^{2m,1}(\mathbb{R}^n)\bigg\}.
$$
According to \eqref{equ-L1-small}, it is easy to see that $E_\varepsilon$ is a non-empty, bounded from below, connected set, thus $V_\varepsilon$ is well defined.

Let $a \in \mathbb{R}^n$, $|a| = 1$, $\lambda > 0$. Consider the following operator
$$
P_{\lambda}(D) = e^{-\lambda a\cdot x}(-\Delta)^{m}e^{\lambda a\cdot x}
$$
with maximal domain in $L^1(\mathbb{R}^n)$. This is a partial differential operator with constant coefficients  though the spacial variable $x$ get involved in the expression. In fact, it is easy to check that
$$
P_{\lambda}(D) = (-\Delta - \lambda^2 + 2i\lambda a\cdot D)^m
$$
with $D = (\frac{1}{i}\frac{\partial}{\partial x_1}, \cdots, \frac{1}{i}\frac{\partial}{\partial x_n})$, and its symbol $P_\lambda(\xi)=(|\xi|^2-\lambda^2+2i\lambda a\cdot\xi)^m$ satisfies the following property (see \cite{Barbatis96})
\begin{equation}\label{equ2.1}
\Re  P_\lambda(\xi)\geq - b_m \lambda^{2m}, \quad \textmd{ for all}~ \xi \in \mathbb{R}^n,
\end{equation}
where $b_m$ = ($\sin\frac{\pi}{4m - 2})^{-(2m - 1)}$. Here and below, we use $\Re f (\mathfrak{I}(f))$ to denote  the real (imaginary) part of $f$.

Denote by $H_{\lambda, V} = P_\lambda(D) + V$, $P(D) = (-\Delta)^m$ with domain $\mathcal {L}^{2m,1}(\mathbb{R}^n)$.

\begin{theorem}\label{thm1}
Let $V\in K_{2m}(\mathbb{R}^n)$. Then the operator $- H_{\lambda, V}$ generates an analytic semigroup on $L^1(\mathbb{R}^n)$. Moreover, the estimate
\begin{equation*}
\|e^{-t H_{\lambda, V}}\|_{L^1,L^1}+\|t H_{\lambda, V}e^{-t H_{\lambda, V}}\|_{L^1,L^1}\leq C\varepsilon^{-(n+1)}e^{t\iota(\varepsilon, \lambda)},
\end{equation*}
holds for all $t > 0$, $a \in \mathbb{R}^n$, $|a| = 1$, $\lambda > 0$, $0<\varepsilon<\varepsilon_1$,  where $\iota(\varepsilon, \lambda):= \varepsilon^{-(n+3)}V_{\varepsilon^{n+3}} +  (b_m+\varepsilon)\lambda^{2m}$, $\varepsilon_1$ and $C$ are some small and large enough constants depending only on $m$ and $n$, respectively.
\end{theorem}
\begin{proof}
We divide the proof into three steps.

\textbf{Step 1.} The estimates of $(\mu + P_{\lambda}(D))^{-1}$. The resolvent can be understood as an integral operator. More precisely
$$
(\mu + P_{\lambda}(D))^{-1}\varphi = h(\cdot, \mu) * \varphi(\cdot), \quad \varphi \in L^1(\mathbb{R}^n),
$$
where $h(x, \mu) = \int_0^\infty \int_{\mathbb {R}^n} e^{ix\cdot \xi}e^{ -z(\mu + P_{\lambda}(\xi))}d\xi dz$, $*$ denotes the convolution. To show the boundedness of $(\mu + P_{\lambda}(D))^{-1}$ on $L^1(\mathbb{R}^n)$, we need to establish some bounds of $h(x,\mu)$.

To this end, we recall the following kernel estimate of $e^{-t(-\triangle)^m}$ in \cite{Barbatis96, Dungey},
\begin{equation}\label{equ2.2}
\left|\int_{\mathbb {R}^n} e^{ix\cdot \xi} e^{- t|\xi|^{2m}}d\xi \right| \leq Ct^{-n/2m}\exp\left\{-\varsigma_m|x|^{\frac{2m}{2m-1}}/t^{\frac{1}{2m-1}}\right\}, \quad t>0,
\end{equation}
where $\varsigma_m = (2m - 1)(2m)^{-\frac{2m}{2m - 1}}\sin\left(\frac{\pi}{4m - 2}\right)$.
From this, similar to (\cite{zheng99}, Theorem 2.1), we have
\begin{equation}\label{equ2.3}
\left|\int_{\mathbb {R}^n} e^{ix\cdot \xi} e^{- z |\xi|^{2m}}d\xi \right| \leq C|\Re  z|^{-n/2m}, \quad \Re z > 0.
\end{equation}
Using \eqref{equ2.2} and \eqref{equ2.3}, repeating word by word of the proof of Lemma 9 in \cite{E. B. Davies95}, we obtain
\begin{equation}\label{equ2.4}
\left|\int_{\mathbb {R}^n} e^{ix\cdot \xi} e^{- z |\xi|^{2m}}d\xi \right| \leq C|z|^{-n/2m}\exp\left\{-(1-\varepsilon)\varsigma_m|x|^{\frac{2m}{2m-1}}/|z|^{\frac{1}{2m-1}}\right\}
\end{equation}
for $\Re z > 0, |\arg z| \leq \theta, \theta = \arctan (\varepsilon/C), 0<\varepsilon<1$.

Changing variable $\xi \rightarrow \xi - i\lambda a$, and shifting the path of integration in the definition of $h$ yields that
\begin{equation}\label{equ-h-complex}
h(x, \mu) = e^{\lambda a \cdot x}\int_0^\infty \int_{\mathbb {R}^n} e^{ix\cdot \xi} \exp\left\{- ( \mu+ |\xi|^{2m})\rho e^{i\theta~ sgn(\Im \mu) }\right\}d\xi d\rho,
\end{equation}
where $sgn (\cdot)$ denotes the standard sign function, $\theta$ is the same as that in \eqref{equ2.4}.
Thus, using \eqref{equ2.4},  we deduce  that for $\rho>0$
\begin{multline}\label{equ-h-1}
\left|\int_{\mathbb {R}^n} e^{ix\cdot \xi} \exp\left\{- ( \mu+ |\xi|^{2m})\rho e^{i\theta~ sgn(-\Im \mu) }\right\}d\xi\right|\\
\leq Ce^{-(\Re \mu \rho\cos\theta+|\Im\mu|\rho\sin\theta)}\rho^{-\frac{n}{2m}} \exp\left\{-(1-\varepsilon)\varsigma_m|x|^{\frac{2m}{2m-1}}/\rho^{\frac{1}{2m-1}}\right\}.
\end{multline}
Using Young inequality we find for all $\lambda>0, x\in \mathbb {R}^n$ and $\varepsilon\in (0, \frac{1}{2})$,
\begin{align}\label{equ-h-2}
e^{\lambda a \cdot x}\leq \exp\{(1-2\varepsilon)\varsigma_m|x|^{\frac{2m}{2m-1}}+(1+C'\varepsilon)b_m\lambda^{2m}\}
\end{align}
for all $\varepsilon\in(0,\frac{1}{4})$ and some constant $C'$ depending only on $m$.

It follows from \eqref{equ-h-complex}-\eqref{equ-h-2} that
\begin{align*}
|h(x,\mu)|\leq C(\Re \mu \cos\theta - (1+c'\varepsilon)b_m\lambda^{2m}+|\Im\mu|\sin\theta)^{-1}e^{-\varepsilon\varsigma_m|x|^{\frac{2m}{2m-1}}}
\end{align*}
for all  $\Re \mu>(1+c'\varepsilon)b_m\lambda^{2m}$ and $\varepsilon\in(0,\frac{1}{4})$. Hence
\begin{align*}
&\|(\mu + P_{\lambda}(D))^{-1}\|_{L^1,L^1} \leq \|h(\cdot, \mu)\|_{L^1} \\
&\leq C\varepsilon^{-\frac{n(2m-1)}{2m}}(\Re \mu \cos\theta - (1+C'\varepsilon)b_m\lambda^{2m}+|\Im\mu|\sin\theta)^{-1}\\
&\leq C\varepsilon^{-n}(\sin\theta)^{-1}/|\Re(\mu-(\cos\theta)^{-1}(1+C'\varepsilon)b_m\lambda^{2m})(\tan\theta)^{-1}+|\Im\mu||\\
&\leq C\varepsilon^{-(n+1)}/|\mu - (b_m+C''\varepsilon)\lambda^{2m}|,
\end{align*}
for all $\Re \mu > (b_m+C''\varepsilon)\lambda^{2m}$, $0<\varepsilon<c_1$, where $c_1(C'')$ is a small (big) a constant depending only on $m,n$. Here we used the fact that $\theta=\arctan(\varepsilon/C)$, $\cos \theta \sim 1$, $\sin\theta\sim \tan\theta \sim \varepsilon$ when $\varepsilon\rightarrow 0$. A scaling argument yields that
\begin{equation}\label{equ2.5}
\|(\mu + P_{\lambda}(D))^{-1}\|_{L^1,L^1}  \leq C\varepsilon^{-(n+1)}/|\mu - (b_m+\frac{\varepsilon}{2})\lambda^{2m}|
\end{equation}
for all $ \Re \mu > (b_m+\frac{\varepsilon}{2})\lambda^{2m}$, $0<\varepsilon<c_1$ with a different $C$.

\textbf{Step 2.} The estimates of $(\mu + P_{\lambda}(D) + V)^{-1}$. Clearly, we have
$$
(\mu + P_{\lambda}(D) + V)^{-1} = (\mu + P_{\lambda}(D))^{-1}(I + V(\mu + P_\lambda(D))^{-1})^{-1}
$$
and
$$
V(\mu + P_\lambda(D))^{-1} = V(\mu + P(D))^{-1}[I+(P(D)-P_\lambda(D))(\mu + P_\lambda(D))^{-1}].
$$
We claim that there exists a constant $C$ depending only on $m,n$ such that for all $\varphi\in \mathcal {L}^{2m,1}(\mathbb{R}^n), \lambda>0$
\begin{equation}\label{equ6}
\|(P(D)-P_\lambda(D))\varphi\|_{L^1} \leq C(\|P_\lambda(D)\varphi\|_{L^1}+\lambda^{2m}\|\varphi\|_{L^1}).
\end{equation}
In fact, fix a multi-index $\alpha$ satisfying $|\alpha| \leq 2m-1$. According to the Bernstein theorem (see Proposition \ref{lemA1}), we find that $D^\alpha(1+(-\triangle)^{m})^{-1}$ is bounded on $L^1(\mathbb{R}^n)$. From this and \eqref{equA2},  we have
\begin{equation}\label{equ7}
r^{|\alpha|}\|D^\alpha\varphi\|_{L^1} \leq C(r^{2m}\|(-\Delta)^m\varphi\|_{L^1} + \|\varphi\|_{L^1}), \quad\varphi \in \mathcal {L}^{2m,1}(\mathbb{R}^n)
\end{equation}
for all $r>0$. For any $\delta \in (0,1)$, we can choose $r = (\delta \lambda^{-q})^{\frac{1}{2m-|\alpha|}}$ with $q=2m-|\alpha|$ in \eqref{equ7} to get
\begin{eqnarray}\label{equ-inter}
\|\lambda^qD^\alpha\varphi\|_{L^1} &\leq& C(\delta\|(-\Delta)^m\varphi\|_{L^1} + \delta^{-\frac{|\alpha|}{2m-|\alpha|}}\lambda^{2m}\|\varphi\|_{L^1}).
\end{eqnarray}
 Since $P_{\lambda}(D)$ can be rewritten as
$$
P_{\lambda}(D)\varphi = (-\Delta)^m\varphi + \sum_{|\beta|<2m, ~q=2m-|\beta|}c_{\beta}\lambda^qD^\beta\varphi,
$$
we apply \eqref{equ-inter} to each term in the sum and choose $\delta$ to be small to obtain
$$
\|P_{\lambda}(D)\varphi-P(D)\varphi\|_{L^1} \leq \frac{1}{2}\|P(D)\varphi\|_{L^1} + C\lambda^{2m}\|\varphi\|_{L^1},
$$
which implies \eqref{equ6}.

It follows from \eqref{equ6} and \eqref{equ2.5} that
\begin{align}\label{equ-L1-step2-4}
\|(P(D)-P_\lambda(D))(\mu + P_\lambda(D))^{-1}\|_{L^1,L^1} &\leq C+ C(|\mu|+\lambda^{2m})\|(\mu+P_\lambda(D))^{-1}\|_{L^1,L^1}\nonumber\\
&\leq C\varepsilon^{-(n+2)}, \quad 0<\varepsilon<c_1.
\end{align}

Thanks to Proposition \ref{lemA2}, we have
\begin{align}\label{equ-L1-step2-5}
\|(\mu+P(D))^{-1}\|_{L^1,L^1}\leq C/|\mu|, \quad \Re \mu>0.
\end{align}
 Rewrite $P(D)(\mu+P(D))^{-1} = I-\mu(\mu+P(D))^{-1}$,  by \eqref{equ-L1-step2-5}, we obtain
\begin{align}\label{equ-L1-step2-6}
\|P(D)(\mu+P(D))^{-1}\|_{L^1,L^1}\leq C, \quad \Re \mu>0.
\end{align}

By the definition of $V_\varepsilon$, using \eqref{equ-L1-step2-5} and \eqref{equ-L1-step2-6} we get
\begin{align}\label{equ-L1-step2-7}
\|V(\mu + P(D))^{-1}\|_{L^1,L^1} &\leq \varepsilon^{n+3}\|P(D)(\mu+P(D))\|_{L^1,L^1} + V_{\varepsilon^{n+3}}\|(\mu+P(D))^{-1}\|_{L^1,L^1}\nonumber\\
& \leq C\varepsilon^{n+3}
\end{align}
for $\Re \mu > \varepsilon^{-(n+3)}V_{\varepsilon^{n+3}}$.

It follows from \eqref{equ-L1-step2-4} and \eqref{equ-L1-step2-7} that
\begin{eqnarray*}
\lefteqn{\|V(\mu + P_\lambda(D))^{-1} \|_{L^1,L^1} }\\&\leq& \|V(\mu + P(D))^{-1}\|_{L^1,L^1}(1+\|(P(D)-P_\lambda(D))(\mu + P_\lambda(D))^{-1}\|_{L^1,L^1})\\
&\leq& C\varepsilon \leq 1/2
\end{eqnarray*}
for $\Re \mu > \iota(\varepsilon, \lambda):= \varepsilon^{-(n+3)}V_{\varepsilon^{n+3}} +  (b_m+\varepsilon)\lambda^{2m}$ and $0<\varepsilon<c_2 =: \min \{c_1, 1/2C\} $. This implies that $I + V(\mu + P_\lambda(D))$ is invertible on $L^1(\mathbb{R}^n)$, and the norm of the inverse is bounded by $2$.

Therefore,
\begin{align}\label{equ-L1-step2-8}
\|(\mu + H_{\lambda, V})^{-1}\|_{L^1,L^1} \leq C\varepsilon^{-(n+1)}/|\Im \mu|
\end{align}
for all $\Re \mu >  \iota(\varepsilon, \lambda), \Im \mu \neq 0$, $0<\varepsilon<c_2$.

\textbf{Step 3.} The estimates of the semigroup. By the resolvent estimates \eqref{equ-L1-step2-8} and the standard method, see e.g. \cite[pp.61--63]{Pazy}, it can be shown that
the resolvent set
$$
\rho(-H_{\lambda, V}) \supset \left\{z: |arg (z-  \iota(\varepsilon, \lambda))| < \frac{\pi}{2} + \delta \right\} \cup \{0\}
$$
where $\delta = 2^{-1}\arctan(C^{-1}\varepsilon^{n+1})$, and in this region
$$
\|(\mu + H_{\lambda, V})^{-1}\|_{L^1,L^1}\leq C\varepsilon^{-(n+1)}/|\mu-  \iota(\varepsilon, \lambda)|.
$$
Thus, $ -H_{\lambda, V} - \iota(\varepsilon, \lambda) $ generates an analytic semigroup in $L^1(\mathbb{R}^n)$, and
$$
\|e^{-t(H_{\lambda, V} + \iota(\varepsilon, \lambda))}\|_{L^1,L^1} + \|t(H_{\lambda, V}+\iota(\varepsilon, \lambda))e^{-t(H_{\lambda, V} + \iota(\varepsilon, \lambda))}\|_{L^1,L^1} \leq C\varepsilon^{-(n+1)}, \quad t>0.
$$
This implies the desired conclusion obviously.
\end{proof}

From Theorem \ref{thm1}, we know that the operator sum $-H_{\lambda, V}$ generates an analytic semigroup on $L^1(\mathbb{R}^n)$. For our purpose, we shall extend the semigroup on $L^p(\mathbb{R}^n)(1\leq p < \infty)$. To this end, we need to show $-H_{\lambda, V}$ also generates an analytic semigroup on $L^2(\mathbb{R}^n)$ in some sense. Note that $H_{\lambda, V}$ may make no sense as an operator acting on $L^2(\mathbb{R}^n)$ for general $V\in K_{2m}(\mathbb{R}^n)$, thus we shall study $H_{\lambda, V}$ on $L^2(\mathbb{R}^n)$ in the framework of sesquilinear forms in Hilbert spaces developed by  Kato \cite{Kato}.

Let $V \in K_{2m}$ and let $a \in \mathbb{R}^n$, $|a| = 1$, $\lambda > 0$.  Define a sesquilinear form
$$
 Q_{\lambda,V}[\varphi, \psi] =
\left(e^{-\lambda a \cdot x}(-\Delta)^{\frac{m}{2}}e^{\lambda a \cdot x}\varphi, e^{\lambda a \cdot x}(-\Delta)^{\frac{m}{2}}e^{-\lambda a \cdot x}\psi\right) + \int V\varphi\overline\psi dx,
$$
where $\varphi, \psi \in D( Q_{\lambda,V}) = H^m(\mathbb{R}^n)$, $(\cdot,\cdot)$ means the inner product in $L^2(\mathbb{R}^n)$.  Note that the form is well defined since $V \in K_{2m}$, see \cite[Theorem 4.2]{zheng09}. In what follows, we denote by $Q_{\lambda,V}[\varphi] = Q_{\lambda,V}[\varphi, \varphi]$, $Q_{\lambda} = Q_{\lambda,0}, Q_0 = Q_{0,0}$ for brevity.

\begin{theorem}\label{thm2}
The form $ Q_{\lambda,V}$ is a closed sectorial form, thus it is associated with a unique m-sectorial operator $\widetilde{H}_{\lambda, V}$ in the sense that
$$
Q_{\lambda,V}(\varphi, \psi) = (\widetilde{H}_{\lambda, V}\varphi, \psi), \quad \varphi \in D(\widetilde{H}_{\lambda, V}), \psi \in H^m(\mathbb{R}^n).
$$
Moreover, $-\widetilde{H}_{\lambda, V}$ generates an analytic semigroup satisfying
\begin{multline*}
\left\|e^{-t\widetilde{H}_{\lambda, V}}\right\|_{L^2,L^2} + \left\|t\widetilde{H}_{\lambda, V}e^{-t\widetilde{H}_{\lambda, V}}\right\|_{L^2,L^2}\\ \leq C\varepsilon^{-2m}\exp\left\{(b_m + \varepsilon)\lambda^{2m}t + C\varepsilon^{-(2m+2)}V_{\varepsilon^{4m+4}}t\right\}
\end{multline*}
for all $\varepsilon \in (0, \varepsilon_2),  t>0$, where $\varepsilon_2$ and $C$ are constants depending only on $m,n$.
\end{theorem}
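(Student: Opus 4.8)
The plan is to write $Q_{\lambda,V}=Q_\lambda+\mathfrak v$ with $\mathfrak v[\varphi,\psi]=\int V\varphi\overline\psi\,dx$, establish that $Q_\lambda$ is closed and sectorial on $H^m(\mathbb{R}^n)$, absorb $\mathfrak v$ as a small relatively bounded perturbation, and then read off the operator and the semigroup bounds from Kato's theory. For $Q_\lambda$ itself I would first pass to the Fourier side: the two conjugated factors $e^{\mp\lambda a\cdot x}(-\triangle)^{m/2}e^{\pm\lambda a\cdot x}$ are, in $L^2$, Fourier multipliers of order $m$ with symbols $\big((\xi\mp i\lambda a)^2\big)^{m/2}$ (the half-integer branch being harmless because the two symbols are complex conjugates and recombine off the measure-zero set $\{a\cdot\xi=0,\ |\xi|<\lambda\}$), so by Plancherel
\begin{equation*}
Q_\lambda[\varphi]=\int_{\mathbb{R}^n}P_\lambda(\xi)\,|\widehat\varphi(\xi)|^2\,d\xi ,
\end{equation*}
and the numerical range of $Q_\lambda$ is contained in the closed convex hull of $\{P_\lambda(\xi):\xi\in\mathbb{R}^n\}$. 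Now \eqref{equ2.1} gives $\Re P_\lambda(\xi)\ge -b_m\lambda^{2m}$, while an elementary splitting into $|\xi|\le R\lambda$ (a compact set on which $P_\lambda$ is $O((1+\lambda)^{2m})$) and $|\xi|>R\lambda$ (where $\arg P_\lambda(\xi)$ is as small as we please once $R$ is large) shows that $\{P_\lambda(\xi)\}$ lies in a sector with vertex $-(b_m+\varepsilon)\lambda^{2m}$ and half-angle $\theta_\varepsilon<\pi/2$; the point is that matching the sharp constant $b_m$ forces $\theta_\varepsilon\to\pi/2$, with $1/\cos\theta_\varepsilon$ comparable to a fixed power of $\varepsilon^{-1}$. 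Since $\Re P_\lambda(\xi)+C(1+\lambda^{2m})\asymp(1+|\xi|^2)^m$, the form norm of $Q_\lambda$ is equivalent to the $H^m$-norm, so $D(Q_\lambda)=H^m(\mathbb{R}^n)$ is complete and $Q_\lambda$ is closed.

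Next I would bring in the potential. Because $V$ is real, $\mathfrak v$ is symmetric, and because $V\in K_{2m}$, Theorem~4.2 of \cite{zheng09} (together with $\|(-\triangle)^{m/2}\varphi\|_{L^2}^2\le C(\Re Q_\lambda[\varphi]+C(1+\lambda^{2m})\|\varphi\|_{L^2}^2)$ from the previous step) shows that $\mathfrak v$ is relatively $Q_\lambda$-form bounded with arbitrarily small relative bound. Tracking constants, and optimising the $L^1$-parameter hidden in the quantity $V_\bullet$, this takes the quantitative shape
\begin{equation*}
|\mathfrak v[\varphi]|\le \tfrac12\,\Re Q_\lambda[\varphi]+C\varepsilon^{-(2m+1)}V_{\varepsilon^{4m+2}}\,\|\varphi\|_{L^2}^2 ;
\end{equation*}
converting the $L^1$-normalised $V_\bullet$ into this $L^2$ form bound while keeping the exact $\varepsilon$-dependence is the most tedious point. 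Kato's perturbation theorem for sectorial forms (\cite{Kato}, Ch.~VI) then shows that $Q_{\lambda,V}=Q_\lambda+\mathfrak v$ is again a closed sectorial form with $D(Q_{\lambda,V})=H^m(\mathbb{R}^n)$, with numerical range in the sector of half-angle $\theta_\varepsilon$ and vertex $\gamma_{\varepsilon,\lambda}:=-(b_m+\varepsilon)\lambda^{2m}-C\varepsilon^{-(2m+1)}V_{\varepsilon^{4m+2}}$. By the first representation theorem (\cite{Kato}, Theorem~VI.2.1) there is a unique m-sectorial operator $\widetilde H_{\lambda,V}$ with $Q_{\lambda,V}(\varphi,\psi)=(\widetilde H_{\lambda,V}\varphi,\psi)$ for $\varphi\in D(\widetilde H_{\lambda,V})$, $\psi\in H^m(\mathbb{R}^n)$.

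Finally, m-sectoriality gives the analytic semigroup and the bounds. Since the numerical range lies in $\{\Re z\ge\gamma_{\varepsilon,\lambda}\}$, one has $\|e^{-t\widetilde H_{\lambda,V}}\|_{L^2,L^2}\le e^{-t\gamma_{\varepsilon,\lambda}}$ at once. For the smoothing term, the resolvent bound $\|(\mu+\widetilde H_{\lambda,V})^{-1}\|_{L^2,L^2}\le 1/\operatorname{dist}(\mu,\text{sector})$ valid off the sector, together with the Cauchy integral representation of $\widetilde H_{\lambda,V}e^{-t\widetilde H_{\lambda,V}}$ along a contour skirting the sector, yields $\|t\widetilde H_{\lambda,V}e^{-t\widetilde H_{\lambda,V}}\|_{L^2,L^2}\lesssim (1/\cos\theta_\varepsilon)e^{-t\gamma_{\varepsilon,\lambda}}$ once the factor $|\gamma_{\varepsilon,\lambda}|t$ coming from splitting $\widetilde H_{\lambda,V}=(\widetilde H_{\lambda,V}-\gamma_{\varepsilon,\lambda})+\gamma_{\varepsilon,\lambda}$ is absorbed into the exponential (at the cost of one more power of $\varepsilon^{-1}$ and a relabeling $\varepsilon\to\varepsilon/2$, which is why $(b_m+\varepsilon)\lambda^{2m}$ and not $b_m\lambda^{2m}$ appears). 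Collecting the powers of $\varepsilon^{-1}$ gives the stated $C\varepsilon^{-(2m+1)}$. The main obstacle throughout is that insisting on the sharp coefficient $b_m$ from \eqref{equ2.1} makes $Q_\lambda$ only \emph{barely} sectorial, so every constant---form-norm equivalence, resolvent bound, angle of analyticity---degenerates as $\varepsilon\to0$ and must be controlled explicitly; the secondary, but unavoidable, difficulty is that $P_\lambda(D)+V$ has no meaning as an operator on $L^2(\mathbb{R}^n)$ for general $V\in K_{2m}$, which is precisely why the argument must be run through the sesquilinear form $Q_{\lambda,V}$ rather than the operator sum.
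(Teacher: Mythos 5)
Your overall strategy is the same as the paper's: work with the sesquilinear form $Q_{\lambda,V}$ on $H^m(\mathbbm{R}^n)$, show it is closed and sectorial, invoke Kato's first representation theorem for the m-sectorial operator $\widetilde H_{\lambda,V}$, and read off the semigroup bounds from the resolvent estimate for a sectorial form. Within that, however, there are two points where you diverge from what the paper actually does, and one of them is a genuine slip rather than a stylistic choice.

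First, the sub-step where you differ harmlessly. You argue sectoriality of $Q_\lambda$ directly from the Plancherel representation $Q_\lambda[\varphi]=\int P_\lambda(\xi)|\widehat\varphi(\xi)|^2 d\xi$ and a qualitative geometric analysis of the set $\{P_\lambda(\xi)\}$, split into $|\xi|\le R\lambda$ and $|\xi|>R\lambda$. The paper instead never argues about $Q_\lambda$ alone; it estimates the numerical range of $Q_{\lambda,V}$ in one go via the decomposition $Q_{\lambda,V}=(1-\varepsilon^{2m})Q_\lambda+\varepsilon^{2m}Q_\lambda+\int V|\varphi|^2$, applying \eqref{equ2.1} to the first piece and the quantitative G\aa{}rding-type comparison \eqref{equ85} (from Barbatis) to the second, which yields the crucial coercivity $\Re Q_{\lambda,V}[\varphi]+(b_m+\tfrac{\varepsilon}{2})\lambda^{2m}\|\varphi\|^2+C\varepsilon^{-(2m+1)}V_{\varepsilon^{4m+2}}\|\varphi\|^2 \gtrsim \varepsilon^{2m}Q_0[\varphi]$. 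This $Q_0$-coefficient is what controls the sector semiangle $\arctan(C''\varepsilon^{-2m})$, and hence the $\varepsilon^{-(2m+1)}$ in the final bound. Your symbol-splitting argument is morally sound, but you never extract any explicit power of $\varepsilon$ from it; you simply assert "$1/\cos\theta_\varepsilon$ comparable to a fixed power of $\varepsilon^{-1}$" and later write down the target $\varepsilon^{-(2m+1)}$ without derivation. Since the whole content of the theorem is the explicit dependence on $\varepsilon$, $\lambda$ and $V_{\varepsilon^{4m+2}}$, this is where you would need to supply the quantitative version, and the paper's route via \eqref{equ85} is designed precisely to make that bookkeeping mechanical.

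Second, the genuine slip. Your proposed form bound
\[
|\mathfrak v[\varphi]|\le \tfrac12\,\Re Q_\lambda[\varphi]+C\varepsilon^{-(2m+1)}V_{\varepsilon^{4m+2}}\,\|\varphi\|_{L^2}^2
\]
cannot be correct as stated, because $\Re Q_\lambda[\varphi]$ is not sign-definite: by \eqref{equ2.1} it can be as negative as $-b_m\lambda^{2m}\|\varphi\|^2$, while the left side is nonnegative. The paper avoids this by bounding $\big|\int V|\varphi|^2\big|$ against the \emph{free} form $Q_0[\varphi]=\|(-\triangle)^{m/2}\varphi\|_{L^2}^2$, namely \eqref{equ9}, which is nonnegative, and then substituting $\varepsilon\to\varepsilon^{2m+1}$ so that the resulting $\varepsilon^{2m+1}Q_0[\varphi]$ can be absorbed into the $\varepsilon^{2m}Q_0[\varphi]$ coercivity term coming from the G\aa{}rding step. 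If you replace $\Re Q_\lambda[\varphi]$ by $Q_0[\varphi]$ (or by $\Re Q_\lambda[\varphi]+C(1+\lambda^{2m})\|\varphi\|^2$) and redo the absorption, your argument converges to the paper's. Finally, note the minor point that invoking $\mathfrak v$ as ``symmetric'' is fine but not really used: since $V$ is real, $\int V|\varphi|^2$ is real, so it only perturbs the real part of the numerical range; the imaginary part is entirely carried by $Q_\lambda$, which is another reason the sectoriality constant must come from a $Q_0$-coercivity estimate rather than from the potential.
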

\begin{proof}
At first, we shall investigate the numerical range of $Q_{\lambda, V}$ defined by
$$
\Theta(Q_{\lambda, V}):= \{Q_{\lambda, V}[\varphi]: \varphi\in H^m(\mathbb{R}^n), \|\varphi\|_{L^2}=1\}.
$$
To this end, for $\varepsilon \in (0,1)$, we rewrite the form $Q_{\lambda, V}[\varphi]$ as
\begin{equation}\label{equ8}
Q_{\lambda, V}[\varphi] = (1-\varepsilon^{2m+1})Q_{\lambda}[\varphi] + \varepsilon^{2m+1}Q_{\lambda}[\varphi] + \int V |\varphi|^2dx.
\end{equation}
In what follows we will find a lower bound of each term on the right side of \eqref{equ8}. For the first term, using
\eqref{equ2.1}, we have
\begin{align}\label{equ-L2-0}
(1-\varepsilon^{2m+1})\Re Q_{\lambda}[\varphi] \geq -b_m(1-\varepsilon^{2m})\lambda^{2m+1}\|\varphi\|^2_{L^2}.
\end{align}
For the second term, we use the result in \cite[Lemma 7]{G. Barbatis4} (or prove directly as \eqref{equ6}) that for all $\varepsilon>0$
\begin{equation}\label{equ85}
| Q_{\lambda}[\varphi] - Q_0[\varphi]| \leq \varepsilon Q_0[\varphi] + C\varepsilon^{-2m + 1} \lambda^{2m}\|\varphi\|^2_{L^2}.
\end{equation}
From \eqref{equ85}, it is easy to see that for all $\varepsilon>0$
\begin{equation}\label{equ86}
\varepsilon^{2m+1}\Re Q_{\lambda}[\varphi] \geq \varepsilon^{2m+1}(1-\varepsilon)Q_0[\varphi] - C\varepsilon^2\lambda^{2m}\|\varphi\|^2_{L^2}.
\end{equation}
For the third term, we claim that
\begin{equation}\label{equ9}
\left|\int V|\varphi|^2 dx\right|\leq \varepsilon \int|(-\Delta)^{\frac{m}{2}}\varphi|^2 + C\varepsilon^{-1} V_{\varepsilon^2} \int |\varphi|^2dx
\end{equation}
for all $ \varphi \in H^m(\mathbb{R}^n)$ and $0 < \varepsilon< \varepsilon_0$ for some small $\varepsilon_0$. In fact, from the proof of \cite[Theorem 2.2, Lemma 4.1]{zheng09}, we obtain
\begin{equation*}
\left\||V|^{1/2}(\nu^m  + (-\Delta)^m)^{- 1/2}\right\|_{L^2,L^2} \leq C_1 \left\|V(\nu - \Delta)^{-m}\right\|^{1/2}_{L^1,L^1},
\end{equation*}
and
\begin{equation*}
 \|V(\nu - \Delta)^{-m}\|_{L^1,L^1} \leq C_2\varepsilon^2  + V_{\varepsilon^2}\nu^{-m}
\end{equation*}
for all $\nu>0$ and $0 < \varepsilon < 1$, where $C_1, C_2$ are  constants depending only on $m,n$. Hence
\begin{equation*}
\||V|^{1/2}(\nu^m  + (-\Delta)^m)^{- 1/2}\|_{L^2,L^2} \leq C_1\sqrt{2C_2}\varepsilon
\end{equation*}
holds for $0 < \varepsilon < 1$ and $\nu \geq (\frac{V_{\varepsilon^2}}{C_2\varepsilon^2})^{1/m}$.
This inequality gives the claim \eqref{equ9} obviously.

Now replacing $\varepsilon$ by $\varepsilon^{2m+2}$ in \eqref{equ9}, we arrive at
\begin{align}\label{equ-L2-1}
\int V|\varphi|^2dx \geq -\varepsilon^{2m+2}Q_0[\varphi] - C\varepsilon^{-(2m+2)}V_{\varepsilon^{4m+4}}\|\varphi\|^2_{L^2}, \quad \varepsilon\in(0,\varepsilon_0).
\end{align}
Combining the lower bounds \eqref{equ-L2-0},\eqref{equ86},\eqref{equ-L2-1} together implies
\begin{eqnarray*}
\Re Q_{\lambda,V}[\varphi] \geq -(b_m + C\varepsilon^2)\lambda^{2m}\|\varphi\|^2_{L^2} + \varepsilon^{2m+1}(1-2\varepsilon)Q_0[\varphi] - C\varepsilon^{-(2m+2)}V_{\varepsilon^{4m+4}}\|\varphi\|^2_{L^2},
\end{eqnarray*}
for all $\varphi\in H^m(\mathbb{R}^n)$ and $\varepsilon \in (0,\varepsilon_0)$. A scaling argument yields that
\begin{eqnarray*}
\Re Q_{\lambda,V}[\varphi] \geq -(b_m + \frac{\varepsilon}{2})\lambda^{2m}\|\varphi\|^2_{L^2} + \frac{1}{2}\varepsilon^{2m+1}Q_0[\varphi] - C\varepsilon^{-(2m+2)}V_{\varepsilon^{4m+4}}\|\varphi\|^2_{L^2},
\end{eqnarray*}
for all $\varphi\in H^m(\mathbb{R}^n)$ and $\varepsilon \in (0,\varepsilon_2)$ with $\varepsilon_2:=\{\varepsilon_0, 1/2, 1/2C\}$.

On the other hand, we deduce from \eqref{equ85} and \eqref{equ9} that
\begin{eqnarray*}
\big|\Im Q_{\lambda,V}[\varphi]\big|  \leq  2\varepsilon Q_0[\varphi]+  C\varepsilon^{-2m+1} \lambda^{2m}\|\varphi\|^2_{L^2} + C\varepsilon^{-1}V_{\varepsilon^{2}}\|\varphi\|^2_{L^2}.
\end{eqnarray*}
Thus, there exists constants $C', C''$ depending only on $m,n$ such that
\begin{eqnarray*}
\big|\Im ( Q_{\lambda,V}+\iota'(\lambda, \varepsilon))[\varphi]\big|\big/ \Re( Q_{\lambda,V}+\iota'(\lambda, \varepsilon))[\varphi]
&\leq& C'\varepsilon^{-2m}
\end{eqnarray*}
for all $\varphi \in H^m(\mathbb{R}^n)$, $0 < \varepsilon \leq \varepsilon_2$. Here and below, $\iota'(\lambda, \varepsilon) = (b_m + \varepsilon)\lambda^{2m}+C''\varepsilon^{-(2m+2)}V_{\varepsilon^{4m+4}}$.

Therefore, the numerical range of $ \Theta(Q_{\lambda,V})$ is contained in the set
$$
\left\{z \in \mathbb{C}\bigg| |arg (z-\iota'(\lambda, \varepsilon))| \leq \arctan (C''\varepsilon^{-2m})\right\}.
$$
Then $ Q_{\lambda,V}$ is a  sectorial sesquilinear form. Note that $ Q_{\lambda,V}$ is densely defined and closed in $L^2(\mathbb{R}^n)$, the existence of $\widetilde{H}_{\lambda, V}$ follows from \cite[Theorem 2.1, p. 322]{Kato} directly. Moreover, the numerical range
$$
\Theta(-(\widetilde{H}_{\lambda, V}+\iota'(\lambda, \varepsilon))):=\{-(\widetilde{H}_{\lambda, V}\varphi+\iota'(\lambda, \varepsilon)\varphi, \varphi): \varphi\in H^m(\mathbb{R}^n), \|\varphi\|_{L^2}=1\}
$$
is contained in the set $\{z\in\mathbb{C}\big|\pi - \theta \leq |arg z|\leq \pi\}$,
 $\theta = \arctan (C''\varepsilon^{-2m})$. According to \cite[Theorem 3.9, p.12]{Pazy},
 we have
\begin{equation}\label{equ11}
\left\|(\mu + \widetilde{H}_{\lambda, V}+\iota'(\lambda, \varepsilon))^{-1}\right\|_{L^2,L^2} \leq \frac{C\varepsilon^{-2m}}{|\mu|}
\end{equation}
for all $\mu \in \{z \in \mathbb{C}\backslash \{0\}\big | |arg z| \leq \frac{1}{2}(\frac{3\pi}{2} - \theta)\}$, $0 < \varepsilon \leq \varepsilon_2$.
Therefore, we have for all $0 < \varepsilon \leq \varepsilon_2$ and $t>0$,
\begin{equation*}
\left\|e^{-t(\widetilde{H}_{\lambda, V}+\iota'(\lambda, \varepsilon))}\right\|_{L^2,L^2} + \left\|t(\widetilde{H}_{\lambda, V}+\iota'(\lambda, \varepsilon))e^{-t(\widetilde{H}_{\lambda, V}+\iota'(\lambda, \varepsilon))}\right\|_{L^2,L^2} \leq C\varepsilon^{-2m}.
\end{equation*}
This inequality gives the desired conclusion.
\end{proof}

We say that the semigroup $e^{-tH_{\lambda,V}}$ and $e^{-t\widetilde{H}_{\lambda, V}}$ are consistent if $e^{-tH_{\lambda,V}}f = e^{-t\widetilde{H}_{\lambda, V}}f$ is valid for any $f \in L^1(\mathbb{R}^n) \bigcap L^2(\mathbb{R}^n)$.
\begin{theorem}\label{thm3}
Semigroups $e^{-tH_{\lambda, V}}$ on $L^1(\mathbb{R}^n)$ and $e^{-t\widetilde {H}_{\lambda, V}}$ on $L^2(\mathbb{R}^n)$ are consistent.
\end{theorem}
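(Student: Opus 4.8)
\emph{The plan} is to pass from the semigroups to their resolvents, prove resolvent consistency first for bounded potentials — where both constructions collapse to the same Neumann series — and then recover the general case by approximation. For the reduction: by Theorems \ref{thm1} and \ref{thm2} both $-H_{\lambda,V}$ and $-\widetilde{H}_{\lambda,V}$ generate analytic semigroups, so for $\Re\mu$ large the orbits $t\mapsto e^{-tH_{\lambda,V}}f$ and $t\mapsto e^{-t\widetilde{H}_{\lambda,V}}f$ have vector-valued Laplace transforms $(\mu+H_{\lambda,V})^{-1}f$ and $(\mu+\widetilde{H}_{\lambda,V})^{-1}f$; regarding the orbits as $\mathcal{S}'(\mathbbm{R}^n)$-valued and invoking uniqueness of the Laplace transform, it suffices to show $(\mu+H_{\lambda,V})^{-1}f=(\mu+\widetilde{H}_{\lambda,V})^{-1}f$ for every $f\in L^1\cap L^2$ and all large real $\mu$. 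The free building block is immediate: on $L^1$, $(\mu+P_\lambda(D))^{-1}$ is convolution with the kernel $h(\cdot,\mu)\in L^1$ from Step 1 of the proof of Theorem \ref{thm1}, and computing $Q_\lambda$ on the Fourier side identifies the $m$-sectorial operator attached to $Q_\lambda$ with the Fourier multiplier of symbol $P_\lambda(\xi)$ and gives $\widehat{h(\cdot,\mu)}=(\mu+P_\lambda(\xi))^{-1}$; hence $(\mu+P_\lambda(D))^{-1}$ and $e^{-tP_\lambda(D)}$ act consistently on $L^1\cap L^2$, as does any composition of such operators with multiplication by a bounded function.

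\emph{Bounded potentials.} Let $V_k=V\mathbbm{1}_{\{|V|\le k\}}\in L^\infty$. Then $H_{\lambda,V_k}$ is a bounded perturbation of $P_\lambda(D)$ on $L^1$, and $Q_{\lambda,V_k}[\varphi,\psi]=Q_\lambda[\varphi,\psi]+\int V_k\varphi\overline\psi\,dx$ is associated on $L^2$ with the genuine operator sum $\widetilde{H}_\lambda+V_k$ (this last fact being routine precisely because $V_k$ is a bounded symmetric perturbation). For $\mu$ large, depending on $k$, both $(\mu+H_{\lambda,V_k})^{-1}$ and $(\mu+\widetilde{H}_{\lambda,V_k})^{-1}$ equal the norm-convergent Neumann series $\sum_{j\ge0}R(\mu)(-V_kR(\mu))^{j}$ with $R(\mu)=(\mu+P_\lambda(D))^{-1}$, whose terms are consistent on $L^1\cap L^2$ by the first paragraph. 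Hence the two resolvents, and then by the Laplace-transform reduction the two semigroups $e^{-tH_{\lambda,V_k}}$ and $e^{-t\widetilde{H}_{\lambda,V_k}}$, are consistent for each fixed $k$.

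\emph{Passage to the limit.} Since $|V_k|\le|V|$ pointwise, one has $\|V_k\varphi\|_{L^1}\le\|V\varphi\|_{L^1}$ and $\int|V_k||\varphi|^2\le\int|V||\varphi|^2$, so the thresholds $\iota(\varepsilon,\lambda)$ of Theorem \ref{thm1} and $\iota'(\lambda,\varepsilon)$ of Theorem \ref{thm2} associated with $V_k$ are bounded uniformly in $k$; for one fixed large real $\mu$ the resolvents $(\mu+H_{\lambda,V_k})^{-1}$ on $L^1$ and $(\mu+\widetilde{H}_{\lambda,V_k})^{-1}$ on $L^2$ are therefore bounded uniformly in $k$. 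Combining this with the second resolvent identity, the bound $\|(I+V(\mu+P_\lambda(D))^{-1})^{-1}\|_{L^1}\le2$ from Step 2 of the proof of Theorem \ref{thm1}, and its form analogue behind \eqref{equ9} on $L^2$, reduces norm-resolvent convergence $H_{\lambda,V_k}\to H_{\lambda,V}$ and $\widetilde{H}_{\lambda,V_k}\to\widetilde{H}_{\lambda,V}$ to the single claim $\|(V-V_k)(\mu+P(D))^{-1}\|_{L^1,L^1}\to0$ and $\||V-V_k|^{1/2}(\nu^m+(-\triangle)^m)^{-1/2}\|_{L^2,L^2}\to0$. The Cauchy integral representation of the analytic semigroups, together with the uniform sectoriality of Theorems \ref{thm1}--\ref{thm2}, then yields $e^{-tH_{\lambda,V_k}}f\to e^{-tH_{\lambda,V}}f$ in $L^1$ and $e^{-t\widetilde{H}_{\lambda,V_k}}f\to e^{-t\widetilde{H}_{\lambda,V}}f$ in $L^2$ for $f\in L^1\cap L^2$; the two limits of one and the same sequence must coincide almost everywhere, so the consistency proven for every $V_k$ passes to $V$, which is the theorem.

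\emph{Main obstacle.} The substantive point is the simultaneous convergence $\|(V-V_k)(\mu+P(D))^{-1}\|_{L^1}\to0$ and $\||V-V_k|^{1/2}(\nu^m+(-\triangle)^m)^{-1/2}\|_{L^2}\to0$, which is where the Kato hypothesis is genuinely used: the local modulus $\sup_x\int_{|x-y|<\delta}\omega_m(x-y)|V(y)|\mathbbm{1}_{\{|V(y)|>k\}}\,dy$ is dominated by that of $V$ — hence small for small $\delta$ uniformly in $k$ — and tends to $0$ as $k\to\infty$ for each fixed $\delta$ by Definition \ref{def1.1}, while the region $|x-y|\ge\delta$ is controlled by the decay of the resolvent kernels already exploited in Theorems \ref{thm1} and \ref{thm2}. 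In other words, all the estimates needed here are of exactly the type proved there, only applied to $V-V_k$ in place of $V$; the only other, entirely routine, point is identifying the form sum with the operator sum for the bounded potentials $V_k$.
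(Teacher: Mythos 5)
Your reduction to resolvents via the uniqueness of the vector-valued Laplace transform is sound, and so is the treatment of bounded truncations via the Neumann series; this much parallels the paper (which instead cites Davies--Hinz for the bounded case). The gap is in the passage to the limit, and it is genuine: the norm-resolvent convergence you invoke does not hold for general Kato class potentials. Concretely, the claim $\|(V-V_k)(\mu+P(D))^{-1}\|_{L^1,L^1}\to0$, with $V_k=V\mathbbm{1}_{\{|V|\le k\}}$, can fail. Take $n=1$, $m=1$ (so $2m>n$ and $\omega_m\equiv1$), and $V(y)=\sum_{j\ge1}j\,\mathbbm{1}_{[j,\,j+1/j]}(y)$. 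Then $\sup_x\int_{|x-y|<1}|V|\le3$, so $V\in K_2(\mathbbm{R})$, but for every $k$ one has $\sup_y\int|V-V_k|(x)\,G_\mu(x-y)\,dx\ge c_\mu>0$ (choose $y$ in the $j$-th bump with $j>k$; the bump carries mass $1$ and the Bessel kernel $G_\mu$ is bounded below on it). The same example defeats $\||V-V_k|^{1/2}(\nu^m+(-\triangle)^m)^{-1/2}\|_{L^2,L^2}\to0$, since by the paper's own inequality this is controlled by $\|(V-V_k)(\nu-\triangle)^{-m}\|_{L^1,L^1}^{1/2}$. Your heuristic in the ``Main obstacle'' paragraph — that the truncated Kato modulus tends to $0$ as $k\to\infty$ for \emph{fixed} $\delta$, uniformly in $x$ — is precisely what fails here: the Kato condition gives smallness as $\delta\to0$ uniformly in $k$, but it does not give uniform integrability of $|V|$ over translates, and the far region contribution does not vanish as $k\to\infty$.

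What one does have is \emph{strong} resolvent convergence: on $L^1$, $(\mu+H_{\lambda,V_k})^{-1}\varphi\to(\mu+H_{\lambda,V})^{-1}\varphi$ for each fixed $\varphi$ by the second resolvent identity and dominated convergence (the fixed function $(\mu+H_{\lambda,V})^{-1}\varphi$ lies in the domain, so $V(\mu+H_{\lambda,V})^{-1}\varphi\in L^1$ dominates the truncated difference). Combined with uniform sectoriality, this gives strong convergence of the semigroups via the Trotter approximation theorem, which is exactly the paper's route. On the $L^2$ side the analogous step is not routine because $\widetilde{H}_{\lambda,V}$ is only a form sum — the resolvent-identity manipulation is not available — and this is why the paper works with a two-parameter truncation $V_{l,k}$ (cutting off the negative and positive parts separately), so that Kato's monotone convergence theorem for sectorial forms applies for the limit $l\to\infty$, while a bespoke convergence-from-below argument (the paper's Proposition \ref{prop}) handles $k\to\infty$, since Kato's from-below theorem is only stated for symmetric forms. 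Your single symmetric truncation $V\mathbbm{1}_{\{|V|\le k\}}$ is not monotone and therefore does not feed into those theorems; it would only work if norm convergence held, which it does not. To repair the proposal, replace norm convergence by strong resolvent convergence throughout, switch to the double truncation on the $L^2$ side to access the monotone form-convergence machinery, and supply an argument of the type of Proposition \ref{prop} for the remaining one-sided limit.
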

\begin{proof}
For  $0 \leq l,k \leq \infty$, set
\begin{equation*}
V_{l,k} =
\left\{\begin{array}{cl}
V(x), \quad -l \leq V \leq k,\\
0, \qquad \textmd{other case}.
\end{array}\right.
\end{equation*}
Replacing $V$ by $V_{l,k}$ in the definition of $Q_{\lambda, V}$, we obtain a new sesquilinear form $Q^{l,k}:= Q_{\lambda,V_{l,k}}$. Since $V \in K_{2m}$, it is easy to show that $V_{l,k}$ belongs to $K_{2m}$, $Q^{l,k}$  are closed sectorial forms. Moreover, $Q^{l,k}$ are associated with closed sectorial operators $\widetilde{H}^{l,k}$, and $-\widetilde{H}^{l,k}$ generate analytic semigroups in $L^2(\mathbb{R}^n)$ satisfying the same estimates with $e^{-t\widetilde{H}_{\lambda, V}}$ in Theorem \ref{thm2}.

Since $C_0^\infty(\mathbb{R}^n)$  is a form core of $Q_{l,~k}$ and $Q_{\infty,~k}$  \cite[Theorem 4.2]{zheng09},
$$
\Im (Q^{l,k} - Q^{\infty,k})[\varphi] = 0, \quad \Re  Q^{l,k}[\varphi] \geq\Re Q^{\infty,k}[\varphi], \quad \textmd{for all } \varphi \in C_0^\infty(\mathbb{R}^n),
$$
and by Lebesgue dominated theorem
\begin{equation*}
\lim_{l\rightarrow \infty} Q^{l,k}[\varphi] = Q^{\infty,k}[\varphi], \quad \textmd{for all }  \varphi \in C_0^\infty(\mathbb{R}^n),
\end{equation*}
an application of monotone convergence theorem from above \cite[p.455]{Kato} implies that
$$
(\mu + \widetilde{H}^{l,k})^{-1}\varphi \xrightarrow[s]{L^2}(\mu + \widetilde {H}^{\infty, k})^{-1}\varphi, \quad l\rightarrow \infty
$$
for all $\varphi \in L^2(\mathbb{R}^n)$ and $ \mu > \iota'(\lambda, \varepsilon)$.

On the other hand, by Proposition \ref{proposition} below, we have
$$
(\mu + \widetilde{H}^{\infty,k})^{-1}\varphi \xrightarrow[s]{L^2}(\mu + \widetilde {H}_{\lambda,V})^{-1}\varphi, \quad k\rightarrow \infty
$$
for all $\varphi \in L^2(\mathbb{R}^n)$ and $ \mu > \iota'(\lambda, \varepsilon)$.

Thus, by Trotter approximation theorem \cite[Theorem 2.16, p.504]{Kato} we obtain
$$
e^{-t\widetilde {H}_{\lambda,V}}\varphi = \lim_{k\rightarrow \infty}\lim_{l\rightarrow \infty}e^{-t\widetilde{H}^{l,k}}\varphi, \quad\textmd{ for } t>0, \varphi \in L^2(\mathbb{R}^n).
$$

Similarly, let $H^{l,k} = P_\lambda(D) + V_{l,k}$ with domain $\mathcal {L}^{2m,1}(\mathbb{R}^n)$. Then $-H^{l,k}$ generates an analytic semigroup on $L^1(\mathbb{R}^n)$ satisfying the same estimates in Theorem \ref{thm1}. Moreover, it follows from dominated theorem that
$$
(\mu + H_{\lambda, V})^{-1}\varphi = \lim_{l,k\rightarrow \infty}(\mu + H^{l,k})^{-1}\varphi, \quad \mu > \iota(\lambda, \varepsilon), \varphi \in L^1(\mathbb{R}^n),
$$
and then, by Trotter approximation theorem  again, we have
$$
e^{-tH_{\lambda,V}}\varphi = \lim_{l,k\rightarrow \infty}e^{-tH^{l,k}}\varphi, \quad\textmd{ for } t>0, \varphi \in L^1(\mathbb{R}^n).
$$

The above considerations reduce the theorem to the consistency of $e^{-t\widetilde{H}^{l,k}}$ and $e^{-tH^{l,k}}$, and this is the case since $V_{l,k} \in L^{\infty}(\mathbb{R}^n)$, see \cite[Theorem 2]{Davies98}.
\end{proof}

In the $L^2$ part proof of Theorem \ref{thm3}, we used a convergence theorem of sectorial forms from above type to pass the limit $l\rightarrow \infty$. To pass the limit $k\rightarrow \infty$, we need a corresponding theorem from below. However, this kind of result is only available for symmetric forms in \cite{Kato}, which is not satisfied by $Q^{\infty,k}$. Fortunately, we can still prove the following result in our setting.
\begin{prop}\label{proposition}
For all $\mu > \iota'(\lambda, \varepsilon), \lambda>0, \varepsilon\in (0,\varepsilon_2),  \varphi \in  L^2(\mathbb{R}^n)$, we have $(\mu + \widetilde{H}^{\infty,k})^{-1}\varphi \xrightarrow[s]{L^2}(\mu + \widetilde {H}_{\lambda,V})^{-1}\varphi$ as $k \rightarrow \infty$.
\end{prop}
\begin{proof}
It is easy to check that
\begin{multline}\label{equ-above-1}
(Q^{\infty,k}+\mu)[(\mu + \widetilde{H}^{\infty,k})^{-1}\varphi - (\mu + \widetilde {H}_{\lambda,V})^{-1}\varphi] + (Q_{\lambda,V}-Q^{\infty,k})[(\mu + \widetilde {H}_{\lambda,V})^{-1}\varphi]\\
 = (Q_{\lambda,V}-Q^{\infty,k})[(\mu + \widetilde {H}_{\lambda,V})^{-1}\varphi, (\mu + \widetilde{H}^{\infty,k})^{-1}\varphi].
\end{multline}
Since $V\in K_{2m}$, we find $|V|\in K_{2m}$. Then by \eqref{equ85} and \eqref{equ11}, we obtain for all $\mu > \iota'(\lambda, \varepsilon), \lambda>0, \varepsilon\in (0,\varepsilon_2),  \varphi \in  L^2(\mathbb{R}^n)$,
\begin{eqnarray*}
\lefteqn{\int |V||(\mu + \widetilde{H}^{\infty,k})^{-1}\varphi|^2dx}\\ &\leq& C\bigg(Q^{\infty,k}[(\mu + \widetilde{H}^{\infty,k})^{-1}\varphi] + (1+\lambda^{2m})\|(\mu + \widetilde{H}^{\infty,k})^{-1}\varphi\|^2_{L^2}\bigg) \\
&\leq& C(\lambda, \varepsilon,\mu)\|\varphi\|^2_{L^2},
\end{eqnarray*}
where $C(\lambda, \varepsilon,\mu)$ is independent of $k$. Similarly, for another constant $ C'(\lambda, \varepsilon,\mu)$ independent of $k$
\begin{eqnarray*}
\int |V||(\mu + \widetilde {H}_{\lambda,V})^{-1}\varphi|^2dx\leq  C'(\lambda, \varepsilon,\mu)\|\varphi\|^2_{L^2}.
\end{eqnarray*}

 Thus, by H\"{o}lder inequality
\begin{eqnarray*}
\lefteqn{\left|(Q_{\lambda,V}-Q^{\infty,k})[(\mu + \widetilde {H}_{\lambda,V})^{-1}\varphi, (\mu + \widetilde{H}^{\infty,k})^{-1}\varphi]\right|}\\
&=& \left|\int (V-V_{\infty,k}) (\mu + \widetilde {H}_{\lambda,V})^{-1}\varphi \overline{(\mu + \widetilde{H}^{\infty,k})^{-1}\varphi} dx\right|\\
 &\leq& \left(\int 2|V||(\mu + \widetilde{H}^{\infty,k})^{-1}\varphi|^2dx\right)^{1/2} \cdot \left(\int 2|V||(\mu + \widetilde {H}_{\lambda,V})^{-1}\varphi dx\right)^{1/2}  \\
&\leq& 2 \sqrt{C(\lambda, \varepsilon,\mu)C'(\lambda, \varepsilon,\mu)}\|\varphi\|^2_{L^2}.
\end{eqnarray*}
Then, by dominated convergence theorem
\begin{align}\label{equ-above-2}
\lim_{k\rightarrow \infty}(Q_{\lambda,V}-Q^{\infty,k})[(\mu + \widetilde {H}_{\lambda,V})^{-1}\varphi, (\mu + \widetilde{H}^{\infty,k})^{-1}\varphi]=0,
\end{align}
and
\begin{align}\label{equ-above-3}
\lim_{k\rightarrow \infty}(Q_{\lambda,V}-Q^{\infty,k})[(\mu + \widetilde {H}_{\lambda,V})^{-1}\varphi]=0.
\end{align}

Inserting \eqref{equ-above-2} and \eqref{equ-above-3} into \eqref{equ-above-1}, noting that $\Re (Q^{\infty,k}+\mu)  \geq \mu - \iota(\lambda, \varepsilon)>0$, we get
$$
0 \leq (\mu - \iota(\lambda, \varepsilon))\left[(\mu + \widetilde{H}^{\infty,k})^{-1}\varphi - (\mu + \widetilde {H}_{\lambda,V})^{-1}\varphi dx\right] \rightarrow 0
$$
as $k \rightarrow \infty$. This implies the desired conclusion.
\end{proof}

\begin{lemma}\label{lem5}
If $V \in K_{2m}$, then the semigroup $e^{-tH_{\lambda, V}}$ on $L^1(\mathbb{R}^n)$ can be extended to a strongly continuous semigroup on $L^p(\mathbb{R}^n)(1 \leq p < \infty)$. Moreover, it holds that
\begin{equation*}
\left\|e^{-tH_{\lambda, V}}\right\|_{L^1, L^{\infty}} \leq C\varepsilon^{-(n+1)([\frac{n}{2m}]+3)}t^{-\frac{n}{2m}}\exp\left\{t(\varepsilon^{-(n+3)}V_{\varepsilon^{n+3}} +  (b_m+\varepsilon)\lambda^{2m})\right\}
\end{equation*}
where $[s]$ denotes the integer part of real number $s$, $t > 0, 0 < \varepsilon <c$.
\end{lemma}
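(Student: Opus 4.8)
The plan is to treat two essentially independent matters: (a) the extension of $e^{-tH_{\lambda,V}}$ to a strongly continuous semigroup on each $L^p(\mathbbm{R}^n)$, $1\le p<\infty$; and (b) the quantitative $L^1\to L^\infty$ bound. Throughout, write $\omega:=\varepsilon^{-2}V_{\varepsilon^2}+b_m(1+\varepsilon)\lambda^{2m}$; a routine absorption step (see below) may at the end enlarge $\varepsilon$ by an arbitrarily small amount, which is harmless since the assertion is stated for all small $\varepsilon$. For Part (a) the first observation is that the $L^2$-form adjoint of $Q_{\lambda,V}$ is $Q_{-\lambda,V}$ (a direct computation: conjugation by $e^{\lambda a\cdot x}$ is traded for conjugation by $e^{-\lambda a\cdot x}$, and $V$ is real), so that the $L^2$-formal adjoint of $H_{\lambda,V}=P_\lambda(D)+V$ is $H_{-\lambda,V}$ and the $L^2$-adjoint of $e^{-t\widetilde H_{\lambda,V}}$ is $e^{-t\widetilde H_{-\lambda,V}}$. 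By Theorem \ref{thm3} (with $a$ replaced by $-a$) the latter is consistent on $L^1\cap L^2$ with $e^{-tH_{-\lambda,V}}$, which, since $|{-a}|=1$, obeys the same bound as $e^{-tH_{\lambda,V}}$ by Theorem \ref{thm1}; hence $e^{-tH_{\lambda,V}}$ extends to $L^\infty$ with $\|e^{-tH_{\lambda,V}}\|_{L^\infty,L^\infty}=\|e^{-tH_{-\lambda,V}}\|_{L^1,L^1}\le C\varepsilon^{-(n+1)}e^{\omega t}$. Riesz--Thorin interpolation between the $L^1$- and $L^\infty$-bounds gives a bounded extension to every $L^p$, $1\le p\le\infty$, with the same exponential factor, and, applied to $tH_{\lambda,V}e^{-tH_{\lambda,V}}$ (whose formal adjoint is $tH_{-\lambda,V}e^{-tH_{-\lambda,V}}$), it gives $\|tH_{\lambda,V}e^{-tH_{\lambda,V}}\|_{L^p,L^p}\le C\varepsilon^{-(n+1)}e^{\omega t}$ for all $p\in[1,\infty]$; strong continuity on $L^p$, $1\le p<\infty$, then follows from that on $L^1$ and $L^2$, the density of $L^1\cap L^2$, and the uniform bound on $\{e^{-tH_{\lambda,V}}:0\le t\le1\}$ just obtained.

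\textbf{Reduction of the $L^1\to L^\infty$ bound.} Fix $r\in(1,\infty)$ with $r'\in(1,\infty)$. By submultiplicativity $\|e^{-tH_{\lambda,V}}\|_{L^1,L^\infty}\le\|e^{-(t/2)H_{\lambda,V}}\|_{L^r,L^\infty}\,\|e^{-(t/2)H_{\lambda,V}}\|_{L^1,L^r}$, and the elementary duality identity $\|T\|_{L^r,L^\infty}=\|T'\|_{L^1,L^{r'}}$ together with $(e^{-sH_{\lambda,V}})'=e^{-sH_{-\lambda,V}}$ reduces matters to proving, for every $p\in(1,\infty)$, a smoothing estimate of the form
\begin{equation}\label{lem5sm}
\|e^{-sH_{\lambda,V}}\|_{L^1,L^p}+\|e^{-sH_{-\lambda,V}}\|_{L^1,L^p}\le C_{V,\varepsilon}\,\varepsilon^{-(n+1)(k_p+1)}\,s^{-\frac n{2m}\left(1-\frac1p\right)}\,e^{\omega s},\qquad s>0,
\end{equation}
with $k_p:=\big[\tfrac n{2m}\big(1-\tfrac1p\big)\big]$ and $C_{V,\varepsilon}$ independent of $\lambda$: indeed, multiplying \eqref{lem5sm} for $p=r$ by the $L^{r'}$-instance (used through the above duality for the first factor) makes the $s$-exponents add up to $\tfrac n{2m}$ and, after collecting the powers of $\varepsilon^{-1}$, produces the claimed $t^{-n/2m}$ and $\varepsilon^{-(n+1)([n/2m]+3)}$.

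\textbf{Proof of the smoothing estimate.} The building block is a single analytic-semigroup smoothing step. For $q\in(1,\infty)$, $V\in K_{2m}$ is $(-\triangle)^m$-bounded on $L^q$ with arbitrarily small relative bound, and $P_\lambda(D)-(-\triangle)^m$ is lower order in the sense of the $L^q$-analogues of \eqref{equ6}--\eqref{equ7}; hence $H_{\lambda,V}$ on $L^q$ has domain $\mathcal{L}^{2m,q}(\mathbbm{R}^n)$ with $\|\varphi\|_{\mathcal{L}^{2m,q}}\lesssim\|H_{\lambda,V}\varphi\|_{L^q}+\kappa\|\varphi\|_{L^q}$, $\kappa=\kappa(\lambda,\varepsilon)\lesssim 1+\lambda^{2m}+V_\varepsilon$, and by complex interpolation $D\big(H_{\lambda,V}^\eta\text{ on }L^q\big)=[\mathcal{L}^{2m,q},L^q]_{1-\eta}=\mathcal{L}^{2m\eta,q}$ for $\eta\in(0,1]$. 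Combining this with the $L^q$-version of the bound from Part (a) — obtained by interpolating Theorem \ref{thm1} against its $L^\infty$-dual, so that the exponent stays $\omega$ rather than degrading to the $L^2$-exponent of Theorem \ref{thm2} — with the absorption $\kappa e^{\omega s}\lesssim s^{-1}e^{\omega s}$ for $0<s$ (from $x\mapsto xe^{-\delta x}\le(e\delta)^{-1}$, at the price of the announced infinitesimal enlargement of $\varepsilon$), and with the Sobolev embedding $\mathcal{L}^{2m\eta,q}\hookrightarrow L^{q^{\ast}}$, $\tfrac1{q^{\ast}}=\tfrac1q-\tfrac{2m\eta}{n}$, one obtains a smoothing step $\|e^{-sH_{\lambda,V}}\|_{L^q,L^{q^{\ast}}}\lesssim C_{V,\varepsilon}\varepsilon^{-(n+1)}s^{-\eta}e^{\omega s}$ valid for every $q\in(1,\infty)$. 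Then I would chain $k_p$ full steps ($\eta=1$, $\tfrac1{q_j}=1-\tfrac{2mj}{n}$) starting from $L^1$ — the very first step from $L^1$ being legitimate since $D(H_{\lambda,V})=\mathcal{L}^{2m,1}$ is the $k=1$ case, contained in \eqref{equ6}, and likewise $D(H_{\lambda,V}^\eta\text{ on }L^1)=\mathcal{L}^{2m\eta,1}$ for $\eta\le1$ — followed by one fractional step ($\eta<1$) landing in $L^p$ exactly; splitting $s$ into $k_p+1$ equal pieces multiplies the rates to $s^{-\frac n{2m}(1-1/p)}$ and the $\varepsilon^{-1}$-prefactors to $\varepsilon^{-(n+1)(k_p+1)}$, which is \eqref{lem5sm} (the few case distinctions according to the arithmetic of $\tfrac n{2m}$ are routine).

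\textbf{Where the difficulty lies.} The essential obstacle is that, in contrast with the classical second-order theory, one cannot iterate the $L^1$-regularity gain: for $V\in K_{2m}$ one does \emph{not} have $D(H_{\lambda,V}^k)=\mathcal{L}^{2mk,1}$ when $k\ge2$ (derivatives of $V$ would be forced into $L^1$), so the smoothing must instead be propagated through the entire scale of $L^q$-spaces with $1<q<\infty$, on which $D(H_{\lambda,V})=\mathcal{L}^{2m,q}$ does hold. Carrying this out while keeping (i) the sharp $L^1$-exponent $\omega=\varepsilon^{-2}V_{\varepsilon^2}+b_m(1+\varepsilon)\lambda^{2m}$ — which forces the intermediate $L^q$-estimates to be built from Theorem \ref{thm1} and its $L^\infty$-dual, never from the exponentially worse $L^2$-estimate of Theorem \ref{thm2} — and (ii) the constant $C$ independent of $\lambda$, which requires trading every $\lambda$-dependent lower-order constant against the exponential at the cost of only an inessential shrinking of $\varepsilon$, is the delicate part; and keeping an exact count of the $\varepsilon^{-1}$-powers through the $\le[\tfrac n{2m}]+1$ chained smoothing steps, the duality splitting, and the endpoint interpolations is what pins down the exponent $[\tfrac n{2m}]+3$ (with a little slack).
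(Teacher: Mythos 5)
Part (a) of your proposal (the extension to $L^p$, strong continuity, duality and interpolation) is essentially the paper's argument and is fine. The difficulty is in Part (b).

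The smoothing step you propose rests on two assertions that fail for general $V\in K_{2m}$. First, you claim that ``for $q\in(1,\infty)$, $V\in K_{2m}$ is $(-\triangle)^m$-bounded on $L^q$ with arbitrarily small relative bound''. This is false: membership in $K_{2m}$ gives $V\in L^1_{\mathrm{loc}}$ uniformly (hence the $L^1$-relative bound), but nothing like $L^q_{\mathrm{loc}}$ for $q>1$. For example, $V(x)=|x|^{-\beta}$ with $n/2<\beta<2m$ (possible whenever $n<4m$) lies in $K_{2m}$ but not in $L^2_{\mathrm{loc}}$, so $V(-\triangle+1)^{-m}$ is not even densely defined on $L^2$; the paper explicitly flags this by working with the \emph{form} $Q_{\lambda,V}$ on $L^2$ rather than an operator sum. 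Consequently the identification $D(H_{\lambda,V}\text{ on }L^q)=\mathcal L^{2m,q}$ on which your ladder rests is unavailable for $q>1$. Second, even granting that domain, $D(H_{\lambda,V}^\eta)=[\mathcal L^{2m,q},L^q]_{1-\eta}$ is not automatic; it requires bounded imaginary powers (or an equivalent) of $H_{\lambda,V}$ on $L^q$, which the paper does not establish and which is itself nontrivial for such potentials.

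The paper's proof avoids both issues: it establishes the single gain $e^{-H_{\lambda,V}}\colon L^1\to\mathcal L^{2m,1}\hookrightarrow L^{p_1}$ with $\tfrac1{p_1}=1-\tfrac1N$, $N=[\tfrac{n}{2m}]+2$ (the $L^1$ domain characterization \emph{does} hold by \eqref{equ6}); takes the dual $e^{-H_{\lambda,V}}\colon L^{p_1'}\to L^\infty$ (since $H_{\lambda,V}^*=H_{-\lambda,V}$ enjoys the same bound); and then interpolates the \emph{bounded} operator $e^{-H_{\lambda,V}}$ by Riesz--Thorin to obtain $L^q\to L^r$ with fixed gain $\tfrac1q-\tfrac1r=\tfrac1N$. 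Chaining $N$ such steps gives $e^{-H_{\lambda,V}}\colon L^1\to L^\infty$ with the stated power of $\varepsilon^{-(n+1)}$; the $t$-dependence then comes from the exact scaling $e^{-t\tau H_{\lambda,V}}=U_t^{-1}e^{-\tau H_{\lambda,V,t}}U_t$, which is cleaner than the absorption trick you sketch (though the latter can be made to work). The crucial point is that the paper never touches the generator on $L^q$ for $q>1$; your proposal does, and that is where it breaks.
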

\begin{proof}
The first statement follows from Theorem \ref{thm1}, \ref{thm2}, \ref{thm3} and duality, interpolation arguments, thus it suffices to prove the desired estimates. To this end, we need two facts. Fact one: the Sobolev embedding
$$
\mathcal {L}^{2m,1}(\mathbb{R}^n) \hookrightarrow L^p(\mathbb{R}^n)
$$
holds for $1 < p < \frac{n}{n-2m}$ if $2m<n$ and $1<p<\infty$ if $2m\geq n$. Fact two:
$$
\|(-\triangle)^m\varphi\|_{L^1}\leq C(\|H_{\lambda,V}\varphi\|_{L^1}+(V_\varepsilon+\lambda^{2m})\|\varphi\|_{L^1}), \quad \varepsilon\in (0,1/2), \varphi\in  \mathcal {L}^{2m,1}(\mathbb{R}^n).
$$
This can be proved by \eqref{equ-L1-small} and \eqref{equ6}.

Set $N=[\frac{n}{2m}]+2$, $\frac{1}{p_1} = 1-\frac{1}{N}$.  Using the two facts, we deduce from Theorem \ref{thm1} that
\begin{align*}
\|e^{-tH_{\lambda,V}}\|_{L^1, L^{p_1}}& \leq C\|(1+(-\triangle)^m)e^{-tH_{\lambda,V}}\|_{L^1, L^1}\\
&\leq C\|(H_{\lambda,V}+(V_\varepsilon+1+\lambda^{2m}))e^{-tH_{\lambda,V}}\|_{L^1, L^1}\\
&\leq\frac{C\epsilon^{-(n+1)}}{t}\exp[t(\varepsilon^{-(n+3)}V_{\varepsilon^{n+3}} +  (b_m+\varepsilon)\lambda^{2m})]
\end{align*}
for all $t>0, \lambda>0, \varepsilon\in (0,\varepsilon_1)$. By an interpolation and duality argument, if $1 \leq p < q \leq \infty$, $\frac{1}{p} - \frac{1}{q} = \frac{1}{N}$, then for all $t>0, \lambda>0, \varepsilon\in (0,\varepsilon_1)$
\begin{equation}\label{equ-L1-infty-1}
\|e^{-tH_{\lambda, V}}\|_{L^p, L^q} \leq\frac{C\epsilon^{-(n+1)}}{t}\exp[t(\varepsilon^{-(n+3)}V_{\varepsilon^{n+3}} +  (b_m+\varepsilon)\lambda^{2m})].
\end{equation}

Now, define a positive series $\{p_k\}_{0\leq k\leq N-1}$ such that
\begin{equation*}
p_0=1;\; \frac{1}{p_{i-1}} - \frac{1}{p_{i}} = \frac{1}{N}, i = 1,2,\cdots,N-2;\; p_{N-1} = N.
\end{equation*}
By the semigroup property and \eqref{equ-L1-infty-1} we obtain
\begin{align}\label{equ-L1-infty-2}
&\|e^{-NtH_{\lambda, V}}\|_{L^1, L^\infty}\leq\|e^{-tH_{\lambda, V}}\|_{N,\infty}\|e^{-tH_{\lambda, V}}\|_{p_{N-2}, p_{N-1}}\cdots\|e^{-tH_{\lambda, V}}\|_{1, p_1}\nonumber\\
&\leq \frac{C^N\epsilon^{-N(n+1)}}{t^N}\exp\{Nt(\varepsilon^{-(n+3)}V_{\varepsilon^{n+3}} +  (b_m+\varepsilon)\lambda^{2m})\}
\end{align}
for all $t>0, \lambda>0, \varepsilon\in (0,\varepsilon_1)$.

Set $t=1/N$. Then \eqref{equ-L1-infty-2} gives for all $ \lambda>0, \varepsilon\in (0,\varepsilon_1)$
\begin{eqnarray}\label{equ-L1-infty-3}
\|e^{-H_{\lambda, V}}\|_{L^1, L^\infty}\leq  C\epsilon^{-N(n+1)}\exp\{\varepsilon^{-(n+3)}V_{\varepsilon^{n+3}} +  (b_m+\varepsilon)\lambda^{2m}\}.
\end{eqnarray}

Now we estimate  $\|e^{-tH_{\lambda, V}}\|_{L^1, L^{\infty}}$ for all $t > 0$. Following \cite{zheng09}, we use a scaling argument. Let $H_{\lambda, V, t} = e^{-\lambda a \cdot (t^\frac{1}{2m}\cdot)}(-\Delta)^me^{\lambda a \cdot (t^\frac{1}{2m}\cdot)} + tV(t^{\frac{1}{2m}\cdot}), U_tg = g(t^{\frac{1}{2m}}\cdot)$, $0<t<1$. Then
\begin{equation*}
tH_{\lambda, V}\varphi = U^{-1}_tH_{\lambda, V, t}U_t\varphi, \quad \textmd{for } \varphi \in \mathcal {L}^{2m,1}(\mathbb{R}^n)
\end{equation*}
and
\begin{equation*}
e^{-t\tau H_{\lambda, V}}\varphi = U^{-1}_te^{-\tau H_{\lambda, V, t}}U_t\varphi, \quad \textmd{for } \varphi \in L^1(\mathbb{R}^n).
\end{equation*}

Similar to \eqref{equ-L1-infty-3}, we have for $ 0 <  t < 1, 0<\varepsilon<\varepsilon_1$
$$
\left\|e^{-H_{\lambda, V, t}}\right\|_{L^1, L^{\infty}}\leq C\epsilon^{-N(n+1)}\exp\{t(\varepsilon^{-(n+3)}V_{\varepsilon^{n+3}} +  (b_m+\varepsilon)\lambda^{2m})\}.
$$
Thus, for $0 < t < 1$ we obtain
\begin{align*}
\left\|e^{-tH_{\lambda, V}}\right\|_{L^1, L^{\infty}} &\leq\|U^{-1}_t\|_{L^\infty, L^\infty}\|e^{-H_{\lambda, V, t}}\|_{L^1, L^\infty}\|U_t\|_{L^1, L^1}\\
&\leq C\varepsilon^{-(n+1)N}t^{-\frac{n}{2m}}\exp\{t(\varepsilon^{-(n+3)}V_{\varepsilon^{n+3}} +  (b_m+\varepsilon)\lambda^{2m})\}.
\end{align*}
For $t > 1$, it follow from Theorem \ref{thm1} and \eqref{equ-L1-infty-3} that
\begin{align*}
\left\|e^{-tH_{\lambda, V}}\right\|_{L^1, L^\infty} &\leq \|e^{-H_{\lambda, V}}\|_{L^1, L^\infty}\|e^{-(t - 1)H_{\lambda, V}}\|_{L^1, L^1}\\
&\leq  C\varepsilon^{-(n+1)(N+1)}\exp\{t(\varepsilon^{-(n+3)}V_{\varepsilon^{n+3}} +  (b_m+\varepsilon)\lambda^{2m})\}.
\end{align*}
The two estimates imply the desired conclusion.
\end{proof}

We can now finish the proof of Theorem \ref{thm1.3}.

\emph{\textbf{The proof of Theorem \ref{thm1.3}}}.
From Lemma \ref{lem5} we know that $e^{-t((-\Delta)^m + V)}$ is bounded from $L^1(\mathbb{R}^n)$ to $L^\infty(\mathbb{R}^n)$, then \cite[Corollary A.1.2]{Simon} there exists a measurable function $K$ on $\mathbb{R}^n \times \mathbb{R}^n$ such that
\begin{equation*}
\left(e^{-t((-\Delta)^m + V)}\varphi\right)(x) = \int K(t,x,y)\varphi(y)dy, \quad \textmd{for } \varphi \in L^1(\mathbb{R}^n).
\end{equation*}
Since $e^{-tH_{\lambda,V}}$ has the kernel
\begin{equation*}
K_{\lambda, a}(t,x,y) = e^{-\lambda a \cdot x}K(t,x,y)e^{\lambda a \cdot y},
\end{equation*}
which satisfies the same bound as $\|e^{-tH_{\lambda, V}}\|_{L^1, L^{\infty}}$, we have
\begin{multline*}
|K(t,x,y)| \leq C\varepsilon^{-(n+1)([\frac{n}{2m}]+3)}t^{-\frac{n}{2m}}\\ \cdot \exp\left\{\lambda a \cdot x - \lambda a \cdot y + t(\varepsilon^{-(n+3)}V_{\varepsilon^{n+3}} +  (b_m+\varepsilon)\lambda^{2m})\right\}.
\end{multline*}
Optimizing the above estimate with respect to $ a$ and then $\lambda > 0$  yields Theorem \ref{thm1.3}.

\begin{remark}
The problem that whether or not  Theorem \ref{thm1.3} is valid for $V\in K_{2m}$ with $m\ge 1$  was originally raised by the third author after the joint work \cite{zheng09} with Yao, where they proved the weaker result that $\|e^{-tH}\|_{L^1, L^{\infty}}\leq Ct^{-\frac{n}{2m}}$.
\end{remark}

\section{Sharp heat kernel estimates for the fractional case}\label{sec3}

In this section, the goal is to prove Theorem \ref{thm1.4}. We always assume that $\alpha\in \mathbb{R}_{+}\setminus\mathbbm{N}$. Denote by $a\wedge b=\min\{a,b\}$ and $a\vee b=\max\{a,b\}$ for $a, b\in\mathbb{R}$. Let $K_0(t,x)$ be the kernel of the free heat semigroup $e^{-t(-\triangle)^\alpha}$, namely
\begin{align}\label{equ-frac-0}
K_0(t,x)=\int_{\mathbb{R}^n}e^{ix\xi}e^{-t|\xi|^{2\alpha}}d\xi, \quad t>0, x\in \mathbb{R}^n.
\end{align}
Define
\begin{align}\label{equ-frac-1}
I(t,x)£º=\frac{t}{|x|^{n+2\alpha}}\wedge t^{-\frac{n}{2\alpha}}, \quad t>0, x\in \mathbb{R}^n.
\end{align}
It is easy to see that there exists some constant $C_0=C_0(n,\alpha)$ such that
\begin{align}\label{equ-frac-2}
\frac{t}{(|x|^2+t^{\frac{1}{\alpha}})^{\frac n2+\alpha}}\leq I(t,x)\leq C_0 \frac{t}{(|x|^2+t^{\frac{1}{\alpha}})^{\frac n2+\alpha}}.
\end{align}
First of all, we recall an estimate of $K_0$, see e.g. \cite{Blumenthal}.
\begin{lemma}\label{lemma3.1}
There exists a constant $C_1=C_1(n,\alpha)$ such that
\begin{equation}\label{equ3.1}
|K_{0}(t,x)| \leq C_1I(t,x),\quad t>0, x\in \mathbb{R}^n.
\end{equation}
\end{lemma}

Next, we need the following characterization of Kato potentials, inspired by the work of  Bogdan and Jackubowski \cite{Bogdan.K}.
\begin{lemma}\label{lemma3.2}
The potential $V \in K_{2\alpha}(\mathbb{R}^n)$ if and only if  $\lim\limits_{t\rightarrow 0}K_{V}(t)=0$, where
$$
K_{V}(t)= \sup_{x\in \mathbb{R}^n}\int_{\mathbb{R}^n}J(t,x-y)|V(y)|dy,
$$
and
$$
J(t,x)=
\left\{\begin{array}{ll}
|x|^{2\alpha-n} \wedge t^{2}|x|^{-n-2\alpha}, & 0<2\alpha<n,\\
(1\vee\ln(t|x|^{-n}))\land t^{2}|x|^{-2n}, & 2\alpha=n,\\
t^{1-\frac{n}{2\alpha}}\land t^{2}|x|^{-n-2\alpha}, & 2\alpha>n.
\end{array}\right.
$$
\end{lemma}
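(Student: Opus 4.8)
The plan is to prove the equivalence by directly comparing the kernel $J(t,\cdot)$ with the weight $\omega_\alpha$ of Definition \ref{def1.1}, the comparison being governed by the crossover scale $|x|\sim t^{1/(2\alpha)}$ at which the two branches of $J(t,\cdot)$ coincide (this makes the heat‑kernel detour unnecessary, though one could also note $J(t,x)\asymp\int_0^t I(s,x)\,ds$). Two elementary observations will do most of the work. First, for $0<2\alpha\le n$ and $t<1$ one has $J(t,x)\le\omega_\alpha(x)$ for all $|x|<e^{-1}$: when $|x|<t^{1/(2\alpha)}$ the first branch of $J$ is essentially $\omega_\alpha(x)$ itself (in the borderline case $2\alpha=n$ one bounds $1\vee\ln(t|x|^{-n})\le 1\vee\ln|x|^{-n}=\ln|x|^{-n}$, using $n\ge1$ and $t<1$), while for $t^{1/(2\alpha)}\le|x|<e^{-1}$ the second branch satisfies $t^2|x|^{-n-2\alpha}\le|x|^{2\alpha-n}\le\omega_\alpha(x)$ because $|x|^{4\alpha}\ge t^2$. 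Second, writing $A:=\sup_x\int_{|x-y|<1}|V(y)|\,dy$, a covering argument gives $A<\infty$ for every $V\in K_{2\alpha}(\mathbbm R^n)$ (for $2\alpha\le n$ because $\omega_\alpha$ is bounded below by a positive constant on any fixed small ball; for $2\alpha>n$ by the very definition), and likewise $\int_{2^k\le|x-y|<2^{k+1}}|V(y)|\,dy\le C_n2^{kn}A$ for $k\ge0$.

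For the implication $V\in K_{2\alpha}\Rightarrow\lim_{t\to0}K_V(t)=0$, I would fix $\eta>0$, choose $\delta_0\in(0,e^{-1})$ with $\sup_x\int_{|x-y|<\delta_0}\omega_\alpha(x-y)|V(y)|\,dy<\eta$ when $2\alpha\le n$ (and just set $\delta_0=e^{-1}$ when $2\alpha>n$), and for $t<\delta_0^{2\alpha}$ split $\int_{\mathbbm R^n}J(t,x-y)|V(y)|\,dy$ over $\{|x-y|<\delta_0\}$, $\{\delta_0\le|x-y|<1\}$, and $\{|x-y|\ge1\}$. On the inner region I would invoke the pointwise comparison $J(t,x-y)\le\omega_\alpha(x-y)$ to bound the contribution by $\eta$ when $2\alpha\le n$, and use $J(t,x-y)\le t^{1-n/(2\alpha)}$ to bound it by $t^{1-n/(2\alpha)}A\to0$ when $2\alpha>n$. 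On the middle region $|x-y|\ge\delta_0\ge t^{1/(2\alpha)}$ forces $J(t,x-y)=t^2|x-y|^{-n-2\alpha}\le t^2\delta_0^{-n-2\alpha}$, so that piece is $\le t^2\delta_0^{-n-2\alpha}A$; on the outer region a dyadic decomposition together with the annulus bound gives a piece $\le C_nA\,t^2\sum_{k\ge0}2^{-2\alpha k}\le CAt^2$. Summing, $K_V(t)\le\eta+CA\,t^2(1+\delta_0^{-n-2\alpha})$ (with an $A\,t^{1-n/(2\alpha)}$ term and no $\eta$ in the case $2\alpha>n$), whence $\limsup_{t\to0}K_V(t)\le\eta$ for every $\eta>0$.

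For the converse $\lim_{t\to0}K_V(t)=0\Rightarrow V\in K_{2\alpha}$, I would simply restrict the supremum defining $K_V(t)$ to a small ball about the centre. When $2\alpha<n$ one has $J(t,x-y)=\omega_\alpha(x-y)$ exactly on $\{|x-y|<t^{1/(2\alpha)}\}$, so $\sup_x\int_{|x-y|<t^{1/(2\alpha)}}\omega_\alpha(x-y)|V(y)|\,dy\le K_V(t)\to0$, and monotonicity of $\delta\mapsto\sup_x\int_{|x-y|<\delta}\omega_\alpha(x-y)|V(y)|\,dy$ upgrades this to the Kato condition. When $2\alpha=n$ I would work on the smaller ball $\{|x-y|<t^{2/n}\}$, on which $J(t,x-y)\ge\tfrac12\omega_\alpha(x-y)$ — indeed there $\ln(t|x-y|^{-n})=\ln t+\omega_\alpha(x-y)\ge\tfrac12\omega_\alpha(x-y)$ and $t^2|x-y|^{-2n}\ge|x-y|^{-n}\ge\omega_\alpha(x-y)$ — so $\sup_x\int_{|x-y|<t^{2/n}}\omega_\alpha(x-y)|V(y)|\,dy\le2K_V(t)\to0$. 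When $2\alpha>n$, $J(t,x-y)=t^{1-n/(2\alpha)}$ on $\{|x-y|<t^{1/(2\alpha)}\}$, so $\sup_x\int_{|x-y|<t^{1/(2\alpha)}}|V(y)|\,dy\le t^{-(1-n/(2\alpha))}K_V(t)<\infty$, and covering a unit ball by balls of radius $t^{1/(2\alpha)}$ yields $A<\infty$, i.e. $V\in K_{2\alpha}$.

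The main obstacle, and essentially the only nonmechanical point, is the borderline case $2\alpha=n$: there the logarithmic weight in $J$ is scale‑dependent, its first branch being $1\vee(\ln t+\omega_\alpha(x))$ rather than $\omega_\alpha(x)$, so one cannot simply identify $K_V(t)$ with the Kato quantity on the natural ball $B(0,t^{1/(2\alpha)})$. The remedy is exactly as above: pass to the smaller ball $B(0,t^{2/n})$ for the lower comparison, and absorb the shift $\ln t\le0$ into the bound $J(t,x)\le\omega_\alpha(x)$ on $B(0,e^{-1})$ for the upper comparison. Everything else — the inequality $t^2|x|^{-n-2\alpha}\le\omega_\alpha(x)$ for $t^{1/(2\alpha)}\le|x|<e^{-1}$, the dyadic tail estimate, and the covering bound $\int_{|x-y|<R}|V|\le C_nR^n\sup_z\int_{|z-y|<1}|V|$ for $R\ge1$ — is routine, and the range $0<2\alpha<n$ already appears in \cite{Bogdan.K}.
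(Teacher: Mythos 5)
Your proof is correct, and it takes a genuinely different and more elementary route than the paper. The paper's argument for the implication $\lim_{t\to 0}K_V(t)=0\Rightarrow V\in K_{2\alpha}$ goes through the resolvent of $(-\triangle)^\alpha$: it first establishes the kernel bound $|R(\mu,x)|\le C_2 J(\mu^{-1},x)$ from the heat-kernel estimate of Lemma~3.1, and then invokes the characterization from \cite{zheng09} that $V\in K_{2\alpha}$ iff $\lim_{\mu\to\infty}\|(\mu+(-\triangle)^\alpha)^{-1}|V|\|_{L^\infty}=0$; for the converse implication it cites \cite{Bogdan.K} for $0<2\alpha<n$ and gives a lattice-cube decomposition (with cube side $t^{1/(2\alpha)}$ shrinking in $t$) for $2\alpha\ge n$. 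You instead compare $J(t,\cdot)$ and $\omega_\alpha$ pointwise on either side of the crossover scale $|x|\sim t^{1/(2\alpha)}$, use a fixed-$\delta_0$ split plus a unit-scale dyadic tail for the forward direction, and for the converse identify $J$ with $\omega_\alpha$ (up to a factor 2 at $2\alpha=n$, on the smaller ball of radius $t^{2/n}$) and appeal to monotonicity in $\delta$. I checked the two delicate points — that both branches of $J$ dominate $\tfrac12\omega_\alpha$ on $\{|x|<t^{2/n}\}$ when $2\alpha=n$, and that the second branch of $J$ is the active one for $|x|>t^{1/(2\alpha)}$ so the dyadic tail is geometric — and both hold. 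The trade-off: your argument is self-contained and avoids the heat-kernel and resolvent machinery entirely, whereas the paper's route produces the resolvent kernel bound $|R(\mu,x)|\le C_2 J(\mu^{-1},x)$ as a by-product, which is reused in Step~2 of the proof of Theorem~\ref{thm1.4} (the bound on $R_j(\mu,x,y)$); if you adopt your proof you would still need that resolvent bound separately there.
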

\begin{proof}
We divide the proof into two steps.

\textbf{Step 1.} We show that $\lim\limits_{t\rightarrow 0}K_{V}(t)=0$ implies that $V \in K_{2\alpha}(\mathbb{R}^n)$. For $\mu>0$, let $R(\mu, x)$ be the integral kernel of $(\mu+(-\Delta)^\alpha)^{-1}$, namely
$$
R(\mu, x) := \int_{\mathbb{R}^n}e^{ix\xi}(\mu+|\xi|^{2\alpha})^{-1}d\xi, \quad x\in \mathbb{R}^n,
$$
which is interpreted in the sense of distributions. Since $(\mu+(-\Delta)^{2\alpha})^{-1}$ is the Laplace transform of $e^{-t(-\Delta)^{\alpha}}$, by the definition of $K_0$, we find
$$
R(\mu,x) = \int_0^\infty e^{-t\mu}K_0(t,x)dt, \quad \mu>0.
$$
We claim that for some $C_2=C_2(\alpha,n)$
\begin{align}\label{equ-frac-3}
|R(\mu,x)|\leq C_2 J(\mu^{-1},x), \quad x \in \mathbb{R}^n,\,\, \mu>0.
\end{align}
In fact, if $2\alpha\neq n$, then \eqref{equ-frac-3} follows from the same proof in Lemma 7 of \cite{Bogdan.K}. If $2\alpha= n$, by Lemma \ref{lemma3.1} and a scaling argument, then \eqref{equ-frac-3} follows from
\begin{align}\label{equ-frac-4}
\int_0^{\infty}e^{-t}I(t,x)dt\leq C_2(1\vee |\ln{|x|^n}|)\wedge |x|^{-2n}.
\end{align}
Now we prove \eqref{equ-frac-4}. By the definition of $I(t,x)$, we have
\begin{align}\label{equ-frac-5}
\int_0^{\infty}e^{-t}I(t,x)dt\leq \int_0^{|x|^n}|x|^{-2n}te^{-t}dt + \int_{|x|^n}^\infty t^{-1}e^{-t}dt.
\end{align}
On one hand,
\begin{align}\label{equ-frac-6}
\int_0^{|x|^n}|x|^{-2n}te^{-t}dt=|x|^{-2n}(1-e^{-|x|^n}-|x|^ne^{-|x|^n}) \leq 1 \wedge |x|^{-2n}.
\end{align}
On the other hand, we have both
$$
\int_{|x|^n}^\infty t^{-1}e^{-t}dt \leq |x|^{-n}e^{-|x|^n} \leq |x|^{-2n}
$$
and
$$
\int_{|x|^n}^\infty t^{-1}e^{-t}dt \leq 1 + |\int_{|x|^n}^1 t^{-1}dt|=1+|\ln |x|^n|.
$$
Thus,
\begin{align}\label{equ-frac-7}
\int_{|x|^n}^\infty t^{-1}e^{-t}dt\leq (1+|\ln |x|^n|) \wedge |x|^{-2n}.
\end{align}
Inserting \eqref{equ-frac-6} and \eqref{equ-frac-7} into \eqref{equ-frac-5} proves \eqref{equ-frac-4}. Then the claim \eqref{equ-frac-3} follows.

Thanks to the definition of $R(\mu,x)$ and \eqref{equ-frac-3}, we find
$$
\|(\mu+(-\Delta)^{\alpha})^{-1}V\|_{L^{\infty}}\leq \sup_{x\in \mathbb{R}^n}|\int{R(\mu,x-y)|V(y)|dy}| \leq C_2K_{V}(\mu^{-1}).
$$
Recall that (see \cite{zheng09}) $V \in K_{2\alpha}(\mathbb{R}^n)$ if and only if
$$
\lim\limits_{\mu\rightarrow \infty}\|(\mu+(-\Delta)^{\alpha})^{-1}V\|_{L^{\infty}}=0.
$$
Therefore, $\lim\limits_{t\rightarrow 0}K_{V}(t)=0$ implies that $V \in K_{2\alpha}(\mathbb{R}^n)$.

\textbf{Step 2.} We show that $V \in K_{2\alpha}(\mathbb{R}^n)$ implies that $\lim\limits_{t\rightarrow 0}K_{V}(t)=0$.
In the case $0<2\alpha<n$, the conclusion follows from the same argument of Corollary 12 in \cite{Bogdan.K}. The idea
to prove other cases is similar. If $2\alpha=n$ and $V\in K_{n}$, then for $t\in(0,e^{-1})$,
\begin{align*}
&\int_{\mathbb{R}^n}{((1\vee\ln\frac{t}{|x-y|^{n}})\wedge \frac{t^2}{|x-y|^{2n}})|V(y)|dy}\\
&=\int_{|x-y|^n<t}(1\vee\ln\frac{t}{|x-y|^{n}})|V(y)|dy+\int_{|x-y|^n\geq t}\frac{|V(y)|}{|x-y|^{2n}}t^{2}dy\\
&\leq 2\int_{|x-y|^n<t}{\ln|x-y|^{-n}|V(y)|dy}+\int_{|x-y|^n\geq t}\frac{|V(y)|}{|x-y|^{2n}}t^{2}dy.
\end{align*}
The first term goes to $0$ as $t\to 0$ (uniformly for $x\in \mathbb{R}^{n}$) by the definition of Kato class. To deal with
the second term, we denote by $\mathcal {C}_{x,r}=\{y\in\mathbb{R}^n: |y_{i}-x_{i}|<r, i=1,2,\cdots,n\}$ for $x\in\mathbb{R}^{n}$, $r>0$, and $K_{s}=\{a \in \mathbb{Z}^{n}: \max|a_{i}|=s\}$ for $s\in\mathbb{N}$. Put $r=t^{\frac{1}{n}}$, we derive
\begin{align*}
\int_{|x-y|^n\geq t}\frac{|V(y)|}{|x-y|^{2n}}t^{2}dy& \leq \sum_{a \in \mathbb{Z}^{n}\backslash{\{0\}}}\int_{\mathcal {C}_{x+ar,r/2}}\frac{|V(y)|}{|x-y|^{2n}}r^{2n}dy\\
& =\sum_{s=1}^{\infty}\sum_{a \in  K_s}\int_{\mathcal {C}_{x+ar,r/2}}\frac{|V(y)|}{|x-y|^{2n}}r^{2n}dy\\
&\leq \sum_{s=1}^{\infty}\sum_{a \in  K_s}(s-\frac{1}{2})^{-2n}\int_{\mathcal {C}_{x+ar,r/2}}|V(y)|dy\\
&\leq C_3\int_{\mathcal {C}_{x+ar,r/2}}|V(y)|dy,
\end{align*}
where in the last step we used that $\sum_{a \in K_s}{1}=(2s+1)^{n}-(2s-1)^n$. Since $V\in K_n$ implies that
$$
\lim_{r\rightarrow 0}\sup_{x\in \mathbb{R}^{n}}\int_{\mathcal {C}_{x+ar,r/2}}|V(y)| dy= 0,
$$
we prove the result in the case $2\alpha=n$.

Finally, if $V\in K_{2\alpha}(\mathbb{R}^n)$ with $2\alpha>n$, then $\sup_{x}\int_{|x-y|<1}|V(y)|dy<\infty$, and thus by the same argument as above we find
\begin{align*}
\sup_{x\in \mathbb{R}^{n}}\int_{\mathbb{R}^n}(1 \wedge \frac{t^{\frac{n}{2\alpha}+1}}{|x-y|^{n+2\alpha}})|V(y)|dy<\infty,
\end{align*}
which completes the proof.
\end{proof}

Like in section 2, we define a number which will appear in the kernel estimates below. Let $\omega=eC_1C_2C_4$, $C_4=2^{\alpha-1}\vee 2^{\frac{n}{2\alpha}}$. For $0<\varepsilon\ll 1$, we set
$$
V^{\varepsilon} = \sup{}\{\sigma: \sigma \in F_\varepsilon\},
$$
where
$$
F_\varepsilon = \bigg\{\sigma\leq 1: t\in (0, \sigma), \omega K_{V}(t)\leq \varepsilon \bigg\}.
$$
We are now ready to prove Theorem \ref{thm1.4}.

\emph{\textbf{The proof of Theorem \ref{thm1.4}}}.
Let $K_{0}(t,x,y)=K_{0}(t,x-y)$, defined in \eqref{equ-frac-0}. For $1\leq j\in \mathbb{N}$, define inductively
\begin{equation}\label{equ3.2}
K_{j}(t,x,y)=\int_{\mathbb{R}^n}\int_{0}^{t}K_{j-1}(t-s,x,z)V(z)K_{0}(s,z,y)dsdz.
\end{equation}
The proof will be divided into three steps.

\textbf{Step 1.} We will prove by induction that
\begin{equation}\label{equ3.3}
|K_{j}(t, x,y)|\leq C_1(\omega K_{V}(t))^{j}I(t, x-y)\quad \textmd{ for all } j\geq 0.
\end{equation}

If $j=0$, then \eqref{equ3.3} follows immediately from Lemma 3.1.

Moreover, we notice that
\begin{align*}
I(t,x)\wedge I(s,y)&\leq (2^{\alpha-1}\frac{t+s}{|x+y|^{n+2\alpha}})\wedge(2^{\frac{n}{2\alpha}}(t+s)^{-\frac{n}{2\alpha}})\leq C_4I(t+s,x+y),
\end{align*}
and thus
\begin{align}\label{equ-frac-10}
I(t,x)I(s,y)&=(I(t,x)\wedge I(s,y))(I(t,x)\vee I(s,y)) \nonumber \\
&\leq C_4I(t+s,x+y)(I(t,x)\vee I(s,y)).
\end{align}
Meanwhile, by \eqref{equ-frac-3} we see that
\begin{align}\label{equ-frac-11}
\int_{0}^{t}I(t-s,x)ds=\int_{0}^{t}I(s,x)ds\leq e\int_{0}^\infty e^{-s/t}I(s,x)ds\leq eC_2J(t,x).
\end{align}
Since for every $x\in\mathbb{R}^n$, $t\mapsto J(t,x)$ is a nondecreasing function,
it follows by \eqref{equ3.2}, the inductive assumption, \eqref{equ-frac-10}, \eqref{equ-frac-11} and the definition of $K_{V}(t)$ that
\begin{align*}
&|K_{j}(t,x,y)|\leq C_1^2(\omega K_{V}(t))^{j-1}\int_{\mathbb{R}^n}\int_{0}^{t}(I(t-s,x-z)I(s,z-y))|V(z)|dsdz \\
&\leq C_1^2C_4(\omega K_{V}(t))^{j-1}I(t,x-y)\int_{\mathbb{R}^n}\int_{0}^{t}(I(t-s,x-z)\vee I(s,z-y))|V(z)|dsdz\\
&\leq eC_1^2C_2C_4(\omega K_{V}(t))^{j-1}I(t,x-y)\int_{\mathbb{R}^n}(J(t,x-z)\vee J(t,z-y))|V(z)|dz\\
&\leq C_1(\omega K_{V}(t))^{j}I(t,x-y).
\end{align*}
Therefore the proof of \eqref{equ3.3} is completed.

\textbf{Step 2.} We will show that for any $t>0$
\begin{equation}\label{equ3.3.2}
\lim_{N\to\infty}\|e^{-tH}-\sum_{j=0}^{N}(-1)^{j}T_j(t)\|_{L^1,L^1}=0,
\end{equation}
where
$$
(T_j(t)f)(x)=\int_{\mathbb{R}^n}{K_j(t,x,y)f(y)dy} \quad\textmd{ for } f \in L^1(\mathbb{R}^n).
$$

If $V\in K_{2\alpha}(\mathbb{R}^n)$, then \cite{zheng09} for each $\varepsilon > 0$ there exists $C_\varepsilon>0$ such that
\begin{align}\label{equ-strong-1}
\|V\varphi\|_{L^1} \leq \varepsilon \|(-\Delta)^{\alpha} \varphi\|_{L^1} + C_{\varepsilon} \|\varphi\|_{L^1}
\quad \textmd{for } \varphi \in \mathcal {L}^{2\alpha,1}(\mathbb{R}^n).
\end{align}
For every $\theta\in [0, \pi)$ and $w_0>0$, set
$$
\omega_0+ \Sigma_{\theta}:=\{z\in \mathbb{C}: |\arg (z-w_0)|\leq  \theta\}.
$$
Fixed $\theta_0\in(0,\frac{\pi}{2})$. Using \eqref{equ-strong-1} with small $\varepsilon$ and Proposition \ref{lemA2}, we find for  some $\omega_0>0$
\begin{align}
\|V(\mu+(-\Delta)^{\alpha})^{-1}\|_{L^1,L^1} &\leq \varepsilon\|(-\Delta)^{\alpha}(\mu+(-\Delta)^{\alpha})^{-1}\|_{L^1,L^1}+C_{\varepsilon} \|(\mu+(-\Delta)^{\alpha})^{-1}\|_{L^1,L^1}\nonumber\\
&\leq \frac{1}{2} \quad \textmd{for } \mu\in\omega_0+ \Sigma_{\theta_0+\frac{\pi}{2}}.\label{equ3.16}
\end{align}
Recall that $H=(-\Delta)^\alpha +V$.  It follows that for all $ \mu\in\omega_0+ \Sigma_{\theta_0+\frac{\pi}{2}}$
\begin{align}\label{equ3.17}
(\mu+H)^{-1}=\sum_{j=0}^{\infty}(-1)^jR_j(\mu)
\end{align}
in the operator norm on $L^1(\mathbb{R}^n)$, where
$$
R_j(\mu)=(\mu+(-\Delta)^{\alpha})^{-1}(V(\mu+(-\Delta)^{\alpha})^{-1})^j.
$$
Applying \eqref{equ3.16} yields
\begin{align}\label{equ3.18}
\sup_{\mu\in\omega_0+ \Sigma_{\theta_0+\frac{\pi}{2}}}\|(\mu-\omega_0)R_j(\mu)\|_{L^1,L^1}\leq 1,
\end{align}
\begin{align}\label{equ3.19}
\sup_{\mu\in\omega_0+ \Sigma_{\theta_0+\frac{\pi}{2}}}\|V(\mu+H)^{-1}\|_{L^1,L^1}\leq C_5 2^{-j}.
\end{align}
Now we denote the remainder term $R_N(\mu)$ for $\mu\in\omega_0+ \Sigma_{\theta_0+\frac{\pi}{2}}$
\begin{align}
R_N(\mu)&=(\mu+H)^{-1}-\sum_{j=0}^{N}(-1)^jR_j(\mu) \nonumber \\
&=(\mu+(-\Delta)^{\alpha})^{-1}(V(\mu+(-\Delta)^{\alpha})^{-1})^{N}V(\mu+H)^{-1}\label{equ3.20}
\end{align}
Applying  \eqref{equ3.16}, \eqref{equ3.19} and  \eqref{equ3.20} we find
\begin{align}\label{equ3.21}
\sup_{\mu\in\omega_0+ \Sigma_{\theta_0+\frac{\pi}{2}}}\|(\mu-\omega_0)R_N(\mu)\|_{L^1,L^1}\leq C_5 2^{-N}.
\end{align}
It follows from \eqref{equ3.18} and the analytic representation theorem \cite[Theorem 2.6.1]{Arendt} that for every $j\in \mathbb{N}\cup\{0\}$ the integral
$$
T'_j(t)=\int_{\Gamma}{e^{\mu t}R_j(\mu)\,d\mu}
$$
exists, where $\Gamma=\Gamma_0\cup\Gamma_\pm$, $\Gamma_0=\{\mu:\mu=\omega_0+\delta e^{i\theta}, |\theta|\leq\theta_1+\frac{\pi}{2}\}$
and $\Gamma_\pm=\{\mu: \mu=\omega_0+re^{\pm i(\theta_1+\frac{\pi}{2})}, r\ge \delta\}$ ($0<\theta_1<\theta_0$, $\delta>0$).
Moreover, the Laplace transform $T_j'(t)$ is $R_j(\mu)$. Similarly, by \eqref{equ3.21} we obtain
\begin{align}\label{equ3.22}
\left\|\int_{\Gamma}e^{\mu t}R_N(\mu)d\mu\right\|_{L^1,L^1}\leq C_6 2^{-N}e^{\omega_0 t}\to 0,\,\,\,\text{as}\,\,N\to \infty.
\end{align}
It follows from \eqref{equ3.20}, \eqref{equ3.22} and the fact that $e^{-tH}=\int_{\Gamma}{e^{\mu t}(\mu+H)^{-1}d\mu}$
\begin{equation}\label{equ3.33}
\lim_{N\to\infty}\|e^{-tH}-\sum_{j=0}^{N}(-1)^{j}T'_j(t)\|_{L^1,L^1}=0,
\end{equation}
Compared to the desired estimate \eqref{equ3.3.2}, it suffices to show that $T_j(t)=T'_j(t)$.  By the uniqueness of Laplace transforms it's enough to show that the Laplace transform of each $T_j(t)$ is $R_j(\mu)$ for $\mu>\omega_0$.
\par
To this end,  we denote
\begin{equation*}
R_{j}(\mu,x,y)=\int{R_{j-1}(\mu,x,z)V(z)R_{0}(\mu,z,y)dz},\quad  j\ge 1,
\end{equation*}
where $R_{0}(\mu,x,y)=R(\mu,x-y)$. According to Lemma \ref{lemma3.2} and the fact that
$$J(\mu^{-1},x-z)\wedge J(\mu^{-1},z-y)\leq CJ(\mu^{-1},x-y),$$
we have the following
\begin{equation}\label{equ3.6}
|R_{j}(\mu,x,y)|\leq CK_V(\mu^{-1})^{j}J(\mu^{-1},x-y)
\end{equation}
 Finally, a direct computation shows
\begin{align*}
\int_{\mathbb{R}^{n}}\int_{0}^{\infty}{|e^{-t\mu}K_{j}(t,x,y)| dtdy}&\leq C_7K_{V}(\mu^{-1})^{j}\int_{\mathbb{R}^{n}}{J(\mu^{-1},x-y)dy}
\leq \frac{ C_7K_{V}(\mu^{-1})^{j}}{\mu},
\end{align*}
which implies again by Fubini's theorem that
 \begin{align*}
\int_{0}^{\infty}{e^{-t\mu}T_j(t)f(x) dt}&=\int_{0}^{\infty}e^{-t\mu}\int_{\mathbb{R}^{n}}K_j(t,x,y)f(y) dydt\\
&=\int_{\mathbb{R}^{n}}R_{j}(\mu,x,y)(x,y)f(y)dy\\
&=R_j(\mu)f(x).
\end{align*}

\textbf{Step 3.} Now, for given $\varepsilon \in (0,1)$, and $t\in(0,V^{\varepsilon})$,  we define
$$K(t,x,y)=\sum_{j=0}^{\infty}{K_{j}(t,x,y)},$$
and the associated operator
$$
T(t)f(x)=\int{K(t,x,y)f(y)dy},\quad f\in L^1(\mathbb{R}^n).
$$
It follows that
$$|K(t,x,y)|\leq \sum_{j=0}^{\infty}C_1(\omega K_{V}(t))^j I(t,x-y) \leq \frac{C_1}{1-\varepsilon}I(t,x-y),$$
and
$$
\lim_{N\to\infty}\|T(t)-\sum_{j=0}^{N}(-1)^{j}T_j(t)\|_{L^1,L^1}=0\quad\textmd{ for }0<t<V^{\varepsilon}.
$$
In view of \eqref{equ3.3.2}, we have proved that $K(t,x,y)$ coincides with the kernel of $e^{-tH}$ for $t$ small enough. For convenience, we still denote $K(t,x,y)$ the kernel of $e^{-tH}$ for any $t>0$, and by semigroup property, we can pass the estimates above to the general case. Indeed, when $t \in (V^{\varepsilon},2V^{\varepsilon})$, since
$$
K(t,x,y)=\int_{\mathbb{R}^n}{K(t/2,x,z)K(t/2,z,y)}dz,
$$
and
$$
\int_{\mathbb{R}^n}I(t;x-y)dy=C_8
$$
 independent of $t$ and $x$, it follows from \eqref{equ-frac-10} that
\begin{align*}
|K(t,x,y)| &\leq (\frac{C_{1}}{1-\varepsilon})^{2}I(t,x-y)\int_{\mathbb{R}^n}{|K(t/2,x,z)|+|K(t/2,z,y)|dz}\\
&\leq 2C_4C_8(\frac{C_1}{1-\varepsilon})^{2}I(t,x-y).
\end{align*}
By doing this inductively, we have for $t \in (2^{n-1}V^{\varepsilon},2^{n}V^{\varepsilon})$,
$$
|K(t,x,y)| \leq  \frac{1}{2C_4C_8}(\frac{2C_{1}C_{4}C_{8}}{1-\varepsilon})^{2^{n}}I(t,x-y).
$$
If we choose $\mu_{\varepsilon, V}=\frac{C}{V^{\varepsilon}}$ with some constant $C$, then  we obtain that
$$
|K(t,x,y)| \leq  Ce^{\mu_{\varepsilon, V}t}I(t,x-y),
$$
which completes the proof of Theorem \ref{thm1.4}.

\begin{remark}
Other topics on the kernel of fractional Laplacian are discussed in references. We refer the readers to \cite{Bogdan.K08} for space-time potentials and \cite{Chen} for Dirichlet boundary conditions, respectively.
\end{remark}


\section{$L^p$ estimates for fractional Schr\"{o}dinger equation}\label{sec4}

As already mentioned in the introduction, when $\alpha$ is an integer, Theorem \ref{thm1.2} can be deduced  from Theorem \ref{thm1.3} (see \cite[Theorem 5.2]{CCO}). In fact, the approach of Davies \cite[Lemma 9]{E. B. Davies95} allows us to deduce an upper Gaussian estimate for $e^{-zH}$  on the right half plane $\mathbb{C}^+$, see also the proof of Theorem \ref{thm1} in Section \ref{sec2}. Here and below,
 $$
H=(-\triangle)^\alpha + V.
$$
It then follows from the heat kernel estimates that
\begin{equation}\label{equ4.01}
\|e^{-zH}\|_{L^p,L^p}\leq C(\frac{|z|}{|\Re z|})^{n_p+\varepsilon}, ~~~  1\leq p\leq \infty,~~~z\in \mathbb{C}^+.
\end{equation}
This implies that $iH$ generates a $(1+H)^{-\alpha}$- regularized group with $\alpha>n_p$ and satisfies \eqref{1.02}. However, the argument does not work directly for the fractional Laplacian. In particular, we do not know that whether estimates \eqref{equ4.01} holds or not in the fractional case. This is an interesting question on its own.

In the sequel, we shall prove Theorem \ref{thm1.2} for integer $\alpha$ and fractional $\alpha$ in a unified manner by adapting the method in \cite{A.Jensen1, A.Jensen2} and \cite{DN}. First, we recall the amalgams of $L^q$ and $l^p$ consisting of functions such that
$$
l^{p}(L^q(\mathbb{R}^n))=\{\varphi \in L_{loc}^{q}: \sum\limits_{k}\|\varphi\|_{L^{q}(\mathcal {C}(k))}^{p}< \infty\},
$$
where ${\mathcal {C}(k)}$ is the unit cube centered at $k$, $k \in \mathbb{Z}^{n}$. More facts on $l^{p}(L^q(\mathbb{R}^n))$-spaces can be found in \cite{four}.

\begin{lemma}\label{thm4.1}
Assume that $1\leq p\leq q \leq \infty$, $\lambda>\frac{n}{2\alpha}(\frac{1}{p}-\frac{1}{q})$, and $H_{\theta}=(-\Delta)^{\alpha}+\theta V(\theta^{\frac{1}{2\alpha}}\cdot)$. Then for sufficiently large constant $M$, $(H_{\theta}+M)^{-\lambda}$ is uniformly bounded  from $L^{p}(\mathbb{R}^n)$ to $l^{p}(L^q)$ with respect to $\theta\in(0,1]$.
\end{lemma}

\begin{proof}
Let $K_{\theta}(t,x,y)$ be the integral kernel of $e^{-tH_\theta}$ and $\tilde{U}_\theta g=\theta^{\frac{n}{2\alpha}}g(\theta^{\frac{1}{2\alpha}}\cdot)$. Then we have
\begin{equation*}
e^{-\theta H_1}g = \tilde{U}_\theta^{-1}e^{- H_{\theta}}\tilde{U}_\theta g,~~g\in L^1(\mathbb{R}^n),
\end{equation*}
and $K_{\theta}(t,x,y)=\theta^{\frac{n}{2\alpha}}K_1(\theta t,\theta^{\frac{1}{2\alpha}}x,\theta^{\frac{1}{2\alpha}}y)$. We claim that
\begin{eqnarray}\label{equ-lpLq-1}
\|e^{-tH_\theta}\varphi\|_{l^p(L^q)}\leq  Ce^{Lt}(1+t^{-\frac{n}{2\alpha}(\frac{1}{p}-\frac{1}{q})})\| \varphi \|_{L^p}.
\end{eqnarray}

We divide the proof into two cases.

\textbf{Case 1: $\alpha=m$ is an integer.} It follows from Theorem \ref{thm1.3} with some given $\varepsilon>0$ and constant $c_m$ that
\begin{eqnarray}\label{equ-lpLq-2}
|K_{\theta}(t,x,y)|&\leq& C\theta^{\frac{n}{2m}}(\theta t)^{-\frac{n}{2m}}e^{L\theta t}\exp\left(-c_m\frac{|\theta^{\frac{1}{2m}}x-\theta^{\frac{1}{2m}}y|^{\frac{2m}{2m-1}}}{(\theta t)^{\frac{1}{2m-1}}}\right)\nonumber\\
&\leq& Ct^{-\frac{n}{2m}}e^{Lt}\exp\left(-c_m\frac{|x-y|^{\frac{2m}{2m-1}}}{t^{\frac{1}{2m-1}}}\right):= \Gamma(t,x-y),
\end{eqnarray}
where $C, L, c_m$ are independent of $\theta$. We can estimate the $l^{1}(L^p)$ norm of $\Gamma(t,x)$. Indeed, combine
\begin{eqnarray*}
\|\Gamma(t,\cdot)\|_{L^p(\mathcal {C}(0))}&\leq &Ct^{-\frac{n}{2m}}e^{Lt}\left(\int {\exp\left(-pc_m\frac{|x|^{\frac{2m}{2m-1}}}{t^{\frac{1}{2m-1}}}\right)dx}\right)^\frac{1}{p}
\leq Ct^{-\frac{n}{2m}(1-\frac{1}{p})}e^{Lt},
\end{eqnarray*}
and
\begin{eqnarray*}
\sum_{k\in \mathbb{Z}^{n}\setminus\{0\}}\|\Gamma(t,\cdot)\|_{L^p(\mathcal {C}(k))}&\leq& Ct^{-\frac{n}{2m}}e^{Lt}\sum_{k\in \mathbb{Z}^{n}\setminus\{0\}}e^{-c|k|^{\frac{2m}{2m-1}}/{t^{\frac{1}{2m-1}}}}
\leq Ce^{Lt}.
\end{eqnarray*}
Thus $\|\Gamma(t,\cdot)\|_{l^1(L^p)}\leq C(1+t^{-\frac{n}{2m}(1-\frac{1}{p})})e^{Lt}$. Using estimate \eqref{equ-lpLq-2} with Young inequality of $l^p(L^q)$(see \cite{A.Jensen2}), we have
\begin{eqnarray*}
\|e^{-tH_\theta}\varphi\|_{l^p(L^q)}&\leq& \|\Gamma(t,\cdot)\ast\varphi\|_{l^p(L^q)} \leq C\|\Gamma(t,\cdot)\|_{l^1(L^r)}\|\varphi\|_{L^p} \\
&\leq& Ce^{Lt}(1+t^{-\frac{n}{2\alpha}(\frac{1}{p}-\frac{1}{q})})\| \varphi \|_{L^p}, \quad \varphi\in L^p(\mathbb{R}^n),
\end{eqnarray*}
where $\frac{1}{p}+\frac{1}{r}=1+\frac{1}{q}$. This proves \eqref{equ-lpLq-1}.

\textbf{Case 2: $\alpha>0$ is not an integer.} It follows from Theorem \ref{thm1.4} that
$$
|K_{\theta}(t,x,y)|\leq Ce^{\mu_{\varepsilon, V}t}I(t,x-y).
$$
Similar to the case $1$, it suffices to show that
\begin{align}\label{equ-lpLq-3}
\|I(t,\cdot)\|_{l^{1}(L^p)} \leq C(1+t^{-\frac{n}{2\alpha}(1-\frac{1}{p})})e^{Lt}.
\end{align}
 In fact,
\begin{align*}
\sum_{k}\|I(t,\cdot)\|_{L^{p}(\mathcal {C}(k))}\leq I_1 + I_2,
\end{align*}
where
\begin{align*}
I_{1}= \sum_{k}\left(\int_{\mathcal {C}(k)\cap{\{|x|\leq t^{\frac{1}{2\alpha}}\}}}I(t,x)^{p}dx\right)^{\frac{1}{2}}\leq \left(\int_{\mathbb{R}^{n}}I(t,x)^{p}dx\right)^{\frac{1}{p}} \leq Ct^{-\frac{n}{2\alpha}(1-\frac{1}{p})},
\end{align*}
\begin{align*}
 I_{2}=\sum_{k}\left(\int_{\mathcal {C}(k)\cap{\{|x|> t^{\frac{1}{2\alpha}}\}}}I(t;x)^{p}dx\right)^{\frac{1}{2}} \leq t\sum_{k\in \mathbb{Z}^n\backslash\{0\}}{\sup_{x\in \mathcal {C}(k)\cap{\{|x|\ge t^{\frac{1}{2\alpha}}\}}}\frac{1}{|x|^{n+2\alpha}}} \leq C.
\end{align*}
This proves \eqref{equ-lpLq-3}.

Finally, according to the formula
\begin{equation}\label{equ4.3}
(H_\theta+M)^{-\lambda}=\frac{1}{\Gamma(\lambda)}\int_{0}^{\infty}t^{\lambda-1}e^{-Mt}e^{-tH_\theta}dt,
\end{equation}
 we deduce from \eqref{equ-lpLq-1} that
\begin{eqnarray*}
\|(H_\theta+M)^{-\lambda}\varphi\|_{l^p(L^q)}\leq \frac{1}{\Gamma(\lambda)}\|\varphi\|_{L^p}\int_{0}^{\infty}(t^{\lambda-\frac{n}{2\alpha}(\frac{1}{p}-\frac{1}{q})-1}+t^{\lambda-1})e^{-(M-L)t}dt.
\end{eqnarray*}
Since $\lambda>\frac{n}{2\alpha}(\frac{1}{p}-\frac{1}{q})$, the integral in the right hand side is finite if $M>L$.
\end{proof}

In Theorem \ref{thm1.2}, we need the additional assumption that $2\alpha\ge[\frac n2]+1$ in the case that $\alpha$ is not an integer. This is due to the following
\begin{lemma}\label{thm4.2}
Let $R=(H+M)^{-1}$. We assume that $\alpha>0$ if $\alpha$ is an integer and $2\alpha\ge[\frac n2]+1$ if $\alpha$ is not an integer. Then there is a positive constant C which is independent of $k\in \mathbb{Z}^n$ such that for $t\in \mathbb{R}$,
$$
\|\langle\cdot-k\rangle^{l}e^{itR}\langle\cdot-k\rangle^{-l}\|_{L^2, L^2}< C\langle t\rangle^{l},\,\,\, l=0, 1,\ldots, [\frac n2]+1.
$$
\end{lemma}
\begin{proof}
We denote by $ad^{l}(H)=[x_i,\ldots,[x_i, H]]$, here $[A, B]=AB-BA$. We claim that for $0\leq l\leq [\frac n2]+1$, $ad^{l}(R)$ is bounded on $L^{2}(\mathbb{R}^n)$. First, it is easy to see
$$[x_{i},(-\Delta)^{\alpha}]=2\alpha\partial_i (-\Delta)^{\alpha-1},$$
and
$$[x_{i},[x_{i},(-\Delta)^{\alpha}]=2\alpha(-\Delta)^{\alpha-1}+4\alpha(\alpha-1)\partial_i^{2} (-\Delta)^{\alpha-2}.$$
More generally, we have
\begin{equation}\label{equ4.4}
\left\{\begin{array}{rll}
ad^{2l-1}(H)&= \quad\sum_{j=1}^{l}C_{\alpha,j}\partial_i^{2j-1}(-\Delta)^{\alpha-l+1-j},&2l-1\leq \alpha,\\
ad^{2l}(H)& = \quad\sum_{j=0}^{l}D_{\alpha,j}\partial_i^{2j}(-\Delta)^{\alpha-l-j},&2l\leq \alpha,
\end{array}\right.
\end{equation}
where $C_{\alpha,j}$,$D_{\alpha,j}$ are constants only depending on $j,\alpha$. In view of the expression \eqref{equ4.4}, we consider the case $ad^{2l}(R)$ only, since the situation for $ad^{2l-1}(R)$ is the same.
We note that $ad^{2l}(R)$ is a linear combination of such terms:
\begin{equation}\label{equ4.41}
R~ad^{i_1}(H)~R~ad^{i_2}(H)~R~\ldots ad^{i_r}(H)~R,
\end{equation}
where $1\leq i_r\leq 2l$, $1\leq r\leq 2l$, and $\sum_{j=1}^{r}i_{j}=2l$. According to \cite[Theorem 4.2]{zheng09}, $V \in K_{2\alpha}(\mathbb{R}^{n})$ implies that for every $\varepsilon > 0$, there exists some $C_{\varepsilon}>0$,
\begin{equation*}
\left|\int_{\mathbb{R}^n}V|f|^{2}dx\right|\leq \varepsilon \|(-\Delta)^{\frac{\alpha}{2}}f\|_{L^2}^{2}+C_{\varepsilon}\|f\|_{L^2}^2.
\end{equation*}
Then for $M$ large enough, $\partial_i^{j}(-\Delta)^{\frac{\alpha-l-j}{2}}R^{\frac{1}{2}}$ is bounded in $L^{2}$, where $j=0, 1,\ldots, 2l$ and $0\leq 2l\leq [\frac{n}{2}]+1$. Indeed,
let $\varepsilon=\frac{1}{2}$ in the inequality above, and choose $M>\lambda_\frac{1}{2}$, one has
\begin{eqnarray}\label{equ4.5}
((H+M)f,f)&\geq& \frac{1}{2}\|(-\Delta)^{\frac{\alpha}{2}}f\|_2^2+(M-\lambda_{\frac{1}{2}})\|f\|_2^2\nonumber\\
&\geq& C\|(-\Delta)^{\frac{\alpha-l}{2}}f\|_2^2,~~~\,\,\,\text{for}~~~ 0\leq l\leq \alpha.
\end{eqnarray}
Combining the fact that $\partial_i^j(-\Delta)^{\frac{-j}{2}}$ is  bounded in $L^2$ for $j\ge 0$, we obtain that
\begin{equation}\label{equ4.7}
\|\partial_i^{j}(-\Delta)^{\frac{\alpha-l-j}{2}}R^{\frac{1}{2}}\|_{L^2, L^2} \leq C,~~~\,\,\, \text{for}\,\, 0\leq l\leq \alpha.
\end{equation}

Notice that when $\alpha=m$ is an integer, then $ad^{2l}(H)=0$, if $2l>2m$ and if $2l\leq 2m$, we can apply \eqref{equ4.7} to obtain that every term in \eqref{equ4.41} is bounded in $L^2$.

When $\alpha$ is not an integer, then our assumption $2\alpha\ge[\frac n2]+1$ indicates that \eqref{equ4.7} is true for $0\leq 2l\leq [\frac n2]+1$, which is what we need.
Then from the relation
$$\frac{d}{ds}(e^{-isR}x_ie^{-i(t-s)R})=-ie^{-isR}ad(R)e^{-i(t-s)R},$$
we have
\begin{equation*}
ad^{1}(e^{-itR})=-i\int_{0}^{t}e^{-isR}ad(R)e^{-i(t-s)R}ds.
\end{equation*}
Using this fact and the claim above repeatedly, we get
$$\|ad^{k}(e^{-itR})\|_{L^2, L^2}< C\langle t\rangle^{k},~~t\in \mathbb{R},$$
for $k=0, 1,\ldots, [\frac n2]+1$. Finally, according to Lemma 3.1 in \cite{A.Jensen2}, one obtains the lemma.
\end{proof}

\begin{remark}\label{rmk4.1}
(a) When $\alpha$ is not an integer, the assumption $2\alpha\ge[\frac n2]+1$ is needed in the approach above even for the free case $H=(-\Delta)^{\alpha}$, since without this condition,  we see from \eqref{equ4.4} that in $ad^{[\frac n2]+1}(R_0)$ with $R_0=(1+(-\Delta)^{\alpha})^{-1}$, there exists terms $R_0^{\frac 12}\partial_i^{j}(-\Delta)^{\frac{\alpha-l-j}{2}}R_0^{\frac 12}$ such that $\alpha<l$, which are no longer bounded in $L^2$, due to the unboundedness of the  corresponding Fourier multiplier.

(b) We mention that there is another approach by using heat kernels (see \cite{DN}). Since the kernel of $ad^{2l}(R)$ (the case $ad^{2l-1}(R)$ is the same) is $(x_i-y_i)^{2l}R(x,y)$, where $R(x,y)$ denotes the kernel of the resolvent. Then similar to Lemma \ref{thm4.1}, we can apply formula \eqref{equ4.3} with $\lambda=1$ and use Young's inequality. We see that when $\alpha$ is an integer, then $ad^l(R)$ is bounded on $L^2$ for any $l\ge0$ due to the exponential decay of the heat kernel (see Theorem \ref{thm1.3}). However, when $\alpha$ is not an integer, the polynomial decay of the kernel (see \ref{thm1.4}) requires $2\alpha>[\frac n2]+1$ to ensure that $ad^{2l}(R)$ is $L^2$ bounded for  $2l\leq [\frac n2]+1$, which is slightly worse than the method shown above.
\end{remark}

\emph{\textbf{The proof of Theorem \ref{thm1.2}.}}
Based on Lemma \ref{thm4.1} and Lemma \ref{thm4.2}, Theorem \ref{thm1.2} follows from the same arguments in \cite[Section 4]{DN}.

\section*{Acknowledgements}
We thank the referee for the careful reading and valuable comments. Duan and Zheng was supported by the National Natural Science Foundation of China No. 61671009 and 11471129, respectively. Wang was Supported by the National Natural Science Foundation of China (No. 11701535), and the Natural Science Fund of Hubei Province Grant No. 2017CFB142. Huang was supported by the Fundamental Research Funds for the Central Universities No. 2018KFYYXJJ041.

\begin{appendix}
\renewcommand{\appendixname}{Appendix\,\,}
\section{Resolvent estimates on $L^1(\mathbb{R}^n)$}\label{sec5}
We shall prove some results concerning the resolvent estimates for $(-\triangle)^\alpha$ on $L^1(\mathbb{R}^n)$ used in the paper. Let $m(\cdot): \mathbb{C}^n \mapsto \mathbb{C}$ be a bounded function. Define the Fourier multiplier operator
$$
m(D)f=\mathcal {F}^{-1}(m(\cdot)\mathcal {F}f), \,\,\,f\in \mathcal{S}(\mathbb{R}^n),
$$
where $D=(\frac{1}{i}\frac{\partial}{\partial x_1}, \frac{1}{i}\frac{\partial}{\partial x_2}, \cdots, \frac{1}{i}\frac{\partial}{\partial x_n})$,  $\mathcal{S}$ is the Schwartz class, $\mathcal {F}$ and $\mathcal {F}^{-1}$ denote the Fourier transform and its inverse transform, respectively. The restrictions in Proposition \ref{lemA1} are slightly weaker than that in \cite[Lemma 2.1]{DDY}, but the proof is the same.
\renewcommand{\appendixname}{}
\begin{prop}\label{lemA1}
Let $m\in C^L(\mathbb{R}^n\backslash \{0\})$ for some $\frac{n}{2}<L\in \mathbb{N}$. Suppose $m$ is bounded and there are constants $\sigma_1,\sigma_2>0$ such that for all $|\gamma|=L$
$$
|D^{\gamma}m(\xi)|\leq C_{\gamma} \min(|\xi|^{\sigma_1-|\gamma|},  |\xi|^{-\sigma_2-|\gamma|} ), \quad \xi\in \mathbb{R}^n\backslash \{0\}.
$$
Then for all $1\leq p\leq \infty$, $m(D)$ can be extended to a bounded operator on $L^p(\mathbb{R}^n)$, namely
$$
\|m(D)\|_{L^p,L^p}\leq C.
$$
\end{prop}

If $m(D)$ is a bounded Fourier multiplier operator on $L^p(\mathbb{R}^n)$, then its norm is invariant under scaling (see \cite[p.145]{Grafakos}), i.e.,
\begin{equation}\label{equA2}
\|m(D)\|_{L^p,L^p}=\|m(hD)\|_{L^p,L^p},\,\,\,\text{for any}\,\,h>0.
\end{equation}

\begin{prop}\label{lemA2}
Let $\alpha>0$ and $\Delta$ be the Laplacian on $\mathbb{R}^n$. Then there exists a constant $C=C(n,\alpha)$ such that
$$
\|(\mu+(-\triangle)^\alpha)^{-1}\|_{L^1,L^1}\leq
 \left\{
\begin{array}{ll}
C/|\mu|, & \mu \in \mathbb{C}\backslash \{0\},\, |\theta|\leq \frac{\pi}{2},\\
C(\sin \theta)^{-([n/2]+2)}/|\mu|, &\mu \in \mathbb{C}\backslash \{0\},\,\frac{\pi}{2}<|\theta|<\pi.
\end{array}\right.
$$
where $\theta=\arg \mu$.
\end{prop}
\begin{proof}
Thanks to \eqref{equA2}, it suffices to show that
\begin{align}\label{equA3}
\|(e^{i\theta}+(-\triangle)^\alpha)^{-1}\|_{L^1,L^1}\leq \left\{
\begin{array}{ll}
C, &  |\theta|\leq \frac{\pi}{2},\\
C(\sin \theta)^{-([n/2]+2)}, &\frac{\pi}{2}<|\theta|<\pi.
\end{array}\right.
\end{align}
The symbol of $(e^{i\theta}+(-\triangle)^\alpha)^{-1}$ is $m^{-1}$, where $m(\xi)=e^{i\theta}+|\xi|^{2\alpha}$. By induction, one can show that for all $|\gamma|>0$
\begin{align}\label{equA4}
D^\gamma m^{-1} = \sum_{1\leq j\leq |\gamma|\atop \sum |\beta|\cdot k_\beta=|\gamma|,\, \sum k_\beta=j}m^{-1-j}\prod_{0< \beta\leq \gamma}C_{k_\beta}(D^\beta m)^{k_\beta}.
\end{align}
Since if $\alpha$ is an integer, the proof of \eqref{equA3} is similar, we assume that $0<\alpha$ is not an integer. It is easy to see that
$$
|m^{-1}|\leq
\left\{
\begin{array}{ll}
\frac{2}{1+|\xi|^{2\alpha}}, & |\theta|\leq \frac{\pi}{2},\\
\frac{2}{\sin \theta(1+|\xi|^{2\alpha})}, &\frac{\pi}{2}<|\theta|<\pi,
\end{array}\right.
$$
and
$$
|D^{\beta}m|\leq C(\alpha,\beta)|\xi|^{2\alpha-|\beta|}, \quad \beta\neq 0.
$$
Let $|\gamma|>0$. Using the two estimates, we deduce from \eqref{equA4} that if $ |\theta|\leq \frac{\pi}{2}$
\begin{align}\label{equA5}
|D^\gamma m^{-1}| &\leq \sum_{1\leq j\leq |\gamma|\atop \sum_{\beta} |\beta|k_\beta=|\gamma|, \sum_\beta k_\beta=j}2^{1+j}(1+|\xi|^{2\alpha})^{-(1+j)}|\xi|^{2\alpha j-|\gamma|}\prod_{0< \beta\leq \gamma}C_{k_\beta}C^{k_\beta}(\alpha,\beta)\nonumber\\
&\leq C(n,\gamma)\min\{|\xi|^{2\alpha-|\gamma|}, |\xi|^{-2\alpha-|\gamma|}\},
\end{align}
and if $\frac{\pi}{2}<|\theta|<\pi$
\begin{align}\label{equA6}
|D^\gamma m^{-1}| \leq C(n,\gamma)\sin\theta^{-1-|\gamma|}\min\{|\xi|^{2\alpha-|\gamma|}, |\xi|^{-2\alpha-|\gamma|}\}.
\end{align}
Then \eqref{equA3} follows from \eqref{equA5}, \eqref{equA6} and Proposition \ref{lemA1}.
\end{proof}

\end{appendix}

\end{document}